\newtheorem{conj}{Conjecture}
\newtheorem{theorem}{Theorem}[section]
\newtheorem{prop}{Proposition}[section]
\newtheorem{cor}{Corollary}[prop]
\newtheorem{lemma}{Lemma}[section]
\theoremstyle{remark}
\newtheorem*{remark}{Remark}
\def\F{\mathcal F}
\def\H{\mathcal H}
\theoremstyle{definition}
\newtheorem{defn}[equation]{Definition}
\title{Closures of Union-Closed Families}
\author{Dhruv Bhasin\thanks{Department of Mathematics, Indian Institute of Science Education and Research, Pune\\Email id: bhasin.dhruv@students.iiserpune.ac.in} }
\date{\today}
\begin{document}

\maketitle
\begin{abstract}
    Given a union-closed family $\mathcal{F}$ of subsets of the universe $[n]$, with $\mathcal{F}$ not equal to the power set of $[n]$, a new subset $A$ can be added to it such that the resulting family remains union-closed. We construct a new family $\overline{\mathcal{F}}$ by adding to $\mathcal{F}$ all such $A$'s, and call this the closure of $\mathcal F$. This paper is dedicated to the study of various properties of such closures, including characterizing families whose closures equal the power set of $[n]$, providing a criterion for the existence of closure roots of such families etc.   
\end{abstract}

\section{Introduction}
The union-closed sets conjecture is an easy-to-state, notoriously difficult problem that was first proposed by Péter Frankl(see \cite{k}). Several research articles (see for e. g. \cite{a, d, h, i, j}) and a Polymath Project (see \cite{g}) have been devoted to the unraveling of this long-standing open problem. 

Letting $[n]$ denote the finite set $\{1, 2,\dots, n\}$ which will serve as our universe and $2^{[n]}$ denote the power set of $[n]$, a family of subsets $\mathcal F\subseteq2^{[n]}$ is said to be \emph{union-closed over universe $[n]$} if $[n]\in\mathcal F$, and for all $A,B\in\mathcal F$, we have $A\cup B\in\mathcal F$. We call each subset $A$ of $[n]$ that is contained in the family $\mathcal{F}$ a \emph{member-subset} of $\mathcal{F}$, and we let $|\mathcal{F}|$ denote the total number of member-subsets of $\mathcal{F}$. Then the union-closed sets conjecture can be stated as follows:
\begin{conj}\label{main_conj}
Let $\mathcal F$ be a union-closed family of sets over the universe $[n]$. Then there exists some element $i\in[n]$ such that $i$ belongs to at least half of the member-subsets of $\mathcal F$, i.e.\ $\sum_{A \in \mathcal{F}} \mathbf{1}_{i \in A} \geqslant \lfloor |\mathcal{F}|/2\rfloor$. 
\end{conj}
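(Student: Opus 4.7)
The statement above is Frankl's union-closed sets conjecture, which has been open since 1979; what follows is therefore not a proof strategy I expect to complete, but the approach I would attempt using the closure framework the rest of this paper develops. The plan is induction on the deficit $d(\mathcal F) := 2^n - |\mathcal F|$. The base case $d(\mathcal F)=0$ is trivial, since each element of $[n]$ lies in exactly $2^{n-1}$ subsets of $[n]$. For the inductive step, one would try to pull back a majority element from a strictly larger union-closed family $\mathcal F' \supsetneq \mathcal F$ whose deficit is smaller.

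The natural candidate for $\mathcal F'$ is a one-step closure: by hypothesis $\mathcal F \neq 2^{[n]}$, so there exists $A \notin \mathcal F$ with $\mathcal F \cup \{A\}$ union-closed, i.e.\ $A \cup B \in \mathcal F$ for every $B \in \mathcal F$. Writing $w_i(\mathcal G) := \sum_{B \in \mathcal G} \mathbf 1_{i \in B}$ for the frequency of $i$, adding $A$ increases $w_i$ by one for every $i \in A$ and leaves the remaining frequencies fixed, while $|\mathcal F'| = |\mathcal F|+1$. If $i \in A$ is a majority element for $\mathcal F'$, then $w_i(\mathcal F) \ge \lceil(|\mathcal F|+1)/2\rceil - 1$, which is not quite sharp enough when parity is adverse. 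To repair this, I would want to choose $A$ adversarially rather than arbitrarily: force a specified high-frequency element of $\mathcal F$ into $A$ so that the pulled-back inequality becomes strict, and then verify that such an $A$ always exists whenever $\mathcal F \neq 2^{[n]}$.

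The main obstacle --- and the reason the conjecture has resisted over forty years of attack --- is that adding a single set to a union-closed family can elevate one element past the majority threshold while simultaneously preventing any \emph{other} element from holding that position; consequently the witness $i$ need not be stable under the closure operation, and a naive induction has no way to commit to a coherent choice of $i$ across the entire chain of closures. A cleaner variant would therefore try to identify a canonical candidate $i^*(\mathcal F)$ --- for instance the element maximising $w_i$ in the full closure $\overline{\mathcal F}$ that this paper introduces --- and to show that $i^*(\mathcal F)$ already satisfies the majority bound in $\mathcal F$ itself, which would be exactly the synergy between closures and the conjecture that motivates the present work. Failing that, one falls back on the entropic lower bound of approximately $(3 - \sqrt 5)/2 \approx 0.382$ due to Gilmer and its subsequent refinements, which the plan above makes no attempt to exceed.
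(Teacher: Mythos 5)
This statement is Frankl's union-closed sets conjecture, which the paper states as an open problem and does not prove; there is therefore no proof in the paper to compare against, and you are right not to claim one. Your sketch is an honest non-proof, and it is worth noting that your induction on the deficit $2^{n}-|\mathcal F|$ (adding one admissible set at a time) is essentially a finer-grained version of the paper's own reformulation, Conjecture~\ref{main_conj_2}, which inducts on the density $k$ and asks whether a majority element of $\overline{\mathcal F}$ can be pulled back to $\mathcal F$. Both run into the same wall you identify: the witness $i$ is not stable under the closure operation.

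Two small corrections to the sketch itself. First, your case analysis is inverted in the easy direction: if the majority element $i$ of $\mathcal F'=\mathcal F\cup\{A\}$ satisfies $i\notin A$, then $w_i(\mathcal F)=w_i(\mathcal F')\geqslant\lfloor(|\mathcal F|+1)/2\rfloor\geqslant\lfloor|\mathcal F|/2\rfloor$ and the induction closes immediately; the only problematic case is when \emph{every} majority element of $\mathcal F'$ lies in $A$ and $|\mathcal F|$ is even. Second, the proposed repair --- choosing $A$ ``adversarially'' so as to contain or avoid a prescribed element --- is not available in general: the admissible sets $A$ are constrained (Lemma~\ref{sec:2-lem_2} only guarantees that \emph{maximal} elements of $2^{[n]}\setminus\mathcal F$ work), and one cannot force a specified element into or out of them. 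So the gap you flag is real and is exactly where every such induction has stalled; the proposal should be read as motivation for the paper's study of closures, not as progress toward the conjecture.
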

A detailed survey by Bruhn and Schaudt in \cite{c} explains how the conjecture has travelled, both geographically and mathematically, as they put it. The problem has equivalent formulations in lattice theory (see \cite{i}) and graph theory (see \cite{d}), and the claim in the conjecture has been established for various lattice classes and graph classes. Some results have been obtained proving that when the size $|\mathcal{F}|$ of the family is sufficiently large (as a function of the size of the universe, $n$), then $\mathcal{F}$ satisfies the union-closed sets conjecture (for instance, see \cite{h}, where it has been established, using Boolean analysis, that there exists a constant $c$ such that all union-closed families $\mathcal{F}$ with $|\mathcal F|\geq(\frac{1}{2}-c)2^{n-1}$ satisfy the conjecture). 

In this paper, we investigate several intriguing properties of union-closed families. In \cite{b}, the number of non-isomorphic union-closed families for $n=7$ was computed, and an algorithm was devised which involved recursively adding a new set $A$ to a union-closed family $\mathcal F$ such that the new family $\mathcal F\cup \{A\}$ remains union-closed. We collect all such sets $A$ and construct a new family 
\begin{equation}\label{closure_defn}
\overline{\mathcal F}=\{A\in 2^{[n]}: \mathcal F\cup\{A\} \text{ is union-closed}\}.
\end{equation}
We call this the \emph{closure} of $\mathcal F$. The family $\overline{\mathcal F}$ is itself union-closed. We mention here (see also Lemma~\ref{sec:2-lem_2}) that as long as a family $\mathcal F$ is a proper subset of $2^{[n]}$, there exists an $A\notin\mathcal F$ such that $\mathcal F\cup \{A\}$ remains union-closed. Consequently, for every union-closed $\mathcal{F}$ that is a proper subset of $2^{[n]}$, it is also a proper subset of its closure $\overline{\mathcal{F}}$ defined in \eqref{closure_defn}. 

Note, further, that since the universe $[n]$ itself is finite, one requires to repeat the operation of taking closures only a finite number of times before a given union-closed family $\mathcal{F}$ reaches the power set $2^{[n]}$. In other words, if we set $\overline{\mathcal F}^{(0)}=\mathcal F$ and $\overline{\F}^{(i)}=\overline{\overline{\mathcal F}^{(i-1)}}$ for each $i \in \mathbb{N}$, then there is a finite $k \in \mathbb{N}$ such that $\overline{\mathcal{F}}^{(k)} = 2^{[n]}$. This inspires us to introduce the following definition:
\begin{defn}
A union-closed family $\mathcal F$ is said to be \emph{$k$-dense}, for $k \in \mathbb{N}_{0}$, if $k$ is the smallest non-negative integer such that $\overline{\mathcal{F}}^{(k)} = 2^{[n]}$. We call $k$ the \emph{density} of $\mathcal{F}$. 
\end{defn}
Closures provide a natural parameter for a potential induction-based proof of Conjecture~\ref{main_conj}, namely the density of a family. The base case, $0-$dense families, is trivially true as $2^{[n]}$ is the only $0-$dense family. Thus, Conjecture~\ref{main_conj} is equivalent to the following conjecture:
\begin{conj}\label{main_conj_2}
Suppose $\F$ is a union-closed family such that $\overline{\F}$ satisfies Conjecture~\ref{main_conj}. Then $\F$ also satisfies Conjecture~\ref{main_conj}.
\end{conj}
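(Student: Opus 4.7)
Fix a union-closed family $\F$ over $[n]$ and assume $\overline{\F}$ satisfies Conjecture~\ref{main_conj}, so there exists $i \in [n]$ with $d_{\overline{\F}}(i) := |\{A \in \overline{\F} : i \in A\}| \geq |\overline{\F}|/2$. Set $\mathcal{G} := \overline{\F} \setminus \F$, so that $|\overline{\F}| = |\F| + |\mathcal{G}|$ and, element-wise, $d_{\overline{\F}}(j) = d_{\F}(j) + d_{\mathcal{G}}(j)$ for every $j \in [n]$. The goal of the proof is to exhibit some $j \in [n]$ with $d_{\F}(j) \geq |\F|/2$. Rearranging the decomposition above, it would suffice to take $j = i$ provided one can establish $d_{\mathcal{G}}(i) \geq |\mathcal{G}|/2$; in words, the witness element already known to work for $\overline{\F}$ must also be heavy on the set of newly addable sets.

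My first step would be to exploit the explicit description of $\mathcal{G}$: namely, $A \in \mathcal{G}$ iff $A \notin \F$ and $A \cup B \in \F$ for every $B \in \F$. Using this characterization I would try to construct, for each $A \in \mathcal{G}$ with $i \notin A$, an injection into the subfamily of $\mathcal{G}$ consisting of sets that do contain $i$. The most natural candidate sends $A$ to $A \cup \{i\}$ whenever that set lies in $\mathcal{G}$, or otherwise to some canonical $i$-containing superset determined by the smallest member of $\F$ above $A$. If such an injection can be exhibited in full generality, then $d_{\mathcal{G}}(i) \geq |\mathcal{G}|/2$ and the desired conclusion is immediate from the identity in the previous paragraph.

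Should the direct witness $i$ fail to transfer, my fallback is a contrapositive argument. Assume every $j \in [n]$ satisfies $d_{\F}(j) < |\F|/2$ and attempt to deduce the same strict inequality for $\overline{\F}$, contradicting the hypothesis. The inequality $d_{\overline{\F}}(j) < |\overline{\F}|/2$ rearranges to $d_{\mathcal{G}}(j) - |\mathcal{G}|/2 < |\F|/2 - d_{\F}(j)$, so the task reduces to showing that the ``excess'' of each element among the added sets $\mathcal{G}$ is strictly dominated by its ``deficit'' in $\F$. The natural tool here is again the closure constraint: averaging over pairs $(A, B) \in \mathcal{G} \times \F$ and using $A \cup B \in \F$, one should be able to transfer element-frequency inequalities from $\F$ to $\mathcal{G}$, perhaps via a double-count of incidences of the form $\{(A,B,j) : A \in \mathcal{G},\ B \in \F,\ j \in A \cup B\}$.

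The principal obstacle is unmistakable: Conjecture~\ref{main_conj_2} is in fact equivalent to Conjecture~\ref{main_conj} itself, by the induction on density sketched in the paragraph preceding the statement, so any complete proof along the lines above would resolve Frankl's long-standing problem. The plan therefore describes a strategy rather than a proof, and the decisive hurdle is precisely the element-frequency transfer between $\F$ and $\mathcal{G}$: showing that the combinatorial rigidity imposed on $\mathcal{G}$ by the condition ``$A \cup B \in \F$ for all $B \in \F$'' is strong enough to prevent $\mathcal{G}$ from redistributing element frequencies in $\overline{\F}$ away from those already present in $\F$. This is where all of the difficulty is concentrated and where any genuinely new idea would have to appear.
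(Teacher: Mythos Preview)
The statement you are attempting to prove is labelled and stated in the paper as a \emph{conjecture}, not a theorem; the paper offers no proof of it. Indeed, immediately before stating it the paper explains that Conjecture~\ref{main_conj_2} is equivalent to Conjecture~\ref{main_conj} (Frankl's union-closed sets conjecture), via induction on density with trivial base case $2^{[n]}$. So there is no ``paper's own proof'' to compare your attempt against.

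You recognise this yourself in your final paragraph, and your self-assessment is accurate: what you have written is a heuristic strategy, not a proof. A couple of specific remarks on that strategy. First, insisting that the \emph{same} witness element $i$ that works for $\overline{\F}$ must also work for $\F$ is already too strong a requirement in general, and the injection $A \mapsto A \cup \{i\}$ can easily land in $\F$ rather than in $\mathcal{G} = \overline{\F}\setminus\F$, so that step does not define a map into the intended target. Your fallback contrapositive formulation, which quantifies over all $j$, is the right shape, but as you say, the averaging/double-counting step that would need to control $d_{\mathcal G}(j)$ in terms of the deficits $|\F|/2 - d_{\F}(j)$ is exactly Frankl's conjecture in disguise and no known technique closes that gap.

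In short: your proposal is not incorrect so much as explicitly incomplete, and the paper agrees with you that completing it would settle a famous open problem.
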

The benefit of this formulation is that it gives us some extra information about the union-closed family for which we need to check the validity of Conjecture~\ref{main_conj}. If the structure of closures and its properties can be understood fairly well then it can hold a potential to give a better understanding of Conjecture~\ref{main_conj}.

We now describe the organization of this paper. We mention at the very outset that throughout this work, it has been assumed that the empty set $\emptyset$ is not contained in \emph{any} family $\mathcal{F}$ of subsets of $[n]$, including the cases where $\mathcal{F} = 2^{[n]}$. In \S\ref{sec:2}, we state and prove several properties of closures and densities of union-closed families of $[n]$, many of which are further utilized in the proofs of results in \S\ref{sec:k}, \S\ref{sec:example}, \S\ref{sec:3}. We conclude \S\ref{sec:2} by Theorem~\ref{main_1} which gives a criterion to obtain a lower bound on the density of a given union-closed family $\F$. We provide examples to show that for some families this bound can be achieved while for others it need not always be achieved. 

In \S\ref{sec:k} we show that for each $k$ there are at least $\binom{n}{k-1}f_{k-1}$ non-isomorphic union-closed families that are $k-$dense, where $n$ is the size of the universe, $k\leqslant n-1$ and $f_k$ is the number of labelled union-closed families over universe $k$. We achieve this by explicit construction. In \S\ref{sec:example}, for each $n\geqslant 6$, we give example of an $(n-1)-$dense family $\F$ which is different from those discussed in \S$\ref{sec:k}$. We give a complete description of its closures $\overline{\F}^{(k)}$ for $k\leq n-5$ and consequently show that it is $(n-1)-$dense. The method of proof used in \S\ref{sec:k} and $\S\ref{sec:example}$ can potentially be used to find densities and description of closures of various union-closed families.

The goal of \S\ref{sec:3} is to give a criterion to check whether a given $1-$dense family $\F$ has a \textit{closure root} or not, i.e, $\H\subseteq\F$ such that $\overline{\H}=\F$. We achieve this using the notion of \textit{relative subsets} we introduce in $\S\ref{sec:3}$. Intuitively, relative subsets can be thought of as subsets within a given family. Using this notion we also generalise Lemma~\ref{sec:2-lem_4} to Proposition~\ref{sec:3_prop_1}. Having build up some basic results regarding relative subsets, in Theorem~\ref{sec:3_theorem_2} , for a $1-$dense family $\F,$ we construct a particular family such that $\F$ has a closure root $\Leftrightarrow$ that particular family is a closure root of $\F$. We conclude the section by Corollary~\ref{sec:3_example_lemma} in which we give a non-trivial example of a $1-$dense family having no closure roots. The notion of relative subsets can serve as an effective tool in the study of closures of union-closed families.

\section{Various properties of closures and densities of union-closed families}\label{sec:2}
 We begin by stating several properties of closures and densities of union-closed families, followed by a succinct characterization of $1$-dense union-closed families using \emph{up-sets}, and finally end with Theorem~\ref{main_1}.
\begin{lemma}\label{sec:2-lem_1}
\label{1}
Let $\mathcal F$ be a union-closed family and $\overline{\mathcal F}$ be its closure, as defined in \eqref{closure_defn}. Then $\overline{\mathcal F}$ is union-closed.
\end{lemma}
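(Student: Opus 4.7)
The plan is to unpack the definition of $\overline{\mathcal F}$ into a convenient pointwise criterion and then do a short case analysis. Observe first that $\mathcal F \subseteq \overline{\mathcal F}$ (adding a set already present keeps the family unchanged, hence union-closed), and $[n] \in \mathcal F \subseteq \overline{\mathcal F}$, so the only real content is union-closure. The key reformulation I would use is: a set $X \subseteq [n]$ lies in $\overline{\mathcal F}$ if and only if for every $F \in \mathcal F$, either $F \cup X \in \mathcal F$ or $F \cup X = X$. This follows immediately from the fact that $\mathcal F \cup \{X\}$ being union-closed reduces, after noting $\mathcal F$ is already union-closed and $X \cup X = X$, to requiring that every mixed union $F \cup X$ land in $\mathcal F \cup \{X\}$.

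Given this reformulation, take $A, B \in \overline{\mathcal F}$; to show $A \cup B \in \overline{\mathcal F}$ I would fix an arbitrary $F \in \mathcal F$ and verify that $F \cup (A \cup B)$ equals either a member of $\mathcal F$ or $A \cup B$ itself. Applying the criterion to $A$ yields two cases. If $F \cup A \in \mathcal F$, then $F \cup A$ is itself an element of $\mathcal F$ to which I can apply the criterion for $B$, giving either $(F \cup A) \cup B \in \mathcal F$, which is exactly what we want, or $(F \cup A) \cup B = B$. In this second subcase, both $A \subseteq B$ and $F \subseteq B$, so $A \cup B = B$ and $F \cup A \cup B = B = A \cup B$. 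In the other main case, $F \cup A = A$, meaning $F \subseteq A$, so trivially $F \cup A \cup B = A \cup B$.

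I do not anticipate any real obstacle here: the argument is a direct unpacking of definitions, and the only mildly delicate moment is the subcase $(F \cup A) \cup B = B$, where one must notice that $A$ is forced into $B$ and hence $A \cup B$ collapses to $B$, so the conclusion still falls out. The same style of argument will later justify treating $\overline{\mathcal F}$ as a bona fide union-closed family, making iterated closures $\overline{\mathcal F}^{(i)}$ well-defined.
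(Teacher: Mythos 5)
Your proof is correct and is essentially the paper's argument: both fix an arbitrary member of $\mathcal F$, apply the membership criterion to one of $A,B$ to split into the cases ``union lands in $\mathcal F$'' versus ``union collapses,'' and then in the former case apply the criterion to the other set, handling the final subcase by noting that one of $A,B$ is forced to contain the other. The only difference is the order in which $A$ and $B$ are used, which is immaterial.
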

\begin{proof}
Fix any $A, B\in\overline{\mathcal F}$. We wish to show that $A\cup B\in\overline{\mathcal F}$. Fix any $C\in\mathcal F$. As $B \in \overline{\mathcal{F}}$, hence $B\cup C\in\mathcal F\cup\{B\}$, which implies that either $B\cup C=B$ or $B\cup C\in\mathcal F$. In the former case, $A\cup B\cup C=A\cup B\in\mathcal F\cup\{A\cup B\}$. In the latter case, $B\cup C\in\mathcal F$, which in turn implies, since $A \in \overline{\mathcal{F}}$, that $A\cup (B\cup C)\in\mathcal F\cup\{A\}$. If $A\cup B\cup C\in\mathcal F$ then we get our desired conclusion. Otherwise, we have $A\cup B\cup C=A$, which implies $B\subseteq A$ and hence $A\cup B=A$, which we already know is in $\overline{\mathcal F}$.
\end{proof}

\begin{lemma}\label{sec:2-lem_2}
\label{2}
Let $\mathcal F$ be a union-closed family over a universe $[n]$. Suppose that $\mathcal F\neq 2^{[n]}$ then $\mathcal F\subsetneq\overline{\mathcal F}$.  
\end{lemma}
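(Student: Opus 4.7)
The plan is to exhibit an explicit $A \in \overline{\mathcal{F}} \setminus \mathcal{F}$. Since the convention of the paper is that $\emptyset$ never appears in any family, the hypothesis $\mathcal F \neq 2^{[n]}$ means that the collection
\[
  \mathcal S = \bigl(2^{[n]} \setminus \{\emptyset\}\bigr) \setminus \mathcal F
\]
is a nonempty finite poset under inclusion; in particular it has at least one maximal element. I would let $A$ be any such maximal element of $\mathcal S$.

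Next I would verify that $\mathcal F \cup \{A\}$ is union-closed, which by definition of $\overline{\mathcal F}$ gives $A \in \overline{\mathcal F}$, and hence $A \in \overline{\mathcal F} \setminus \mathcal F$ as desired (together with $\mathcal F \subseteq \overline{\mathcal F}$, which is obvious since $\mathcal F \cup \{B\} = \mathcal F$ whenever $B \in \mathcal F$). The verification splits into three kinds of pairwise unions in $\mathcal F \cup \{A\}$. Unions of two sets from $\mathcal F$ are in $\mathcal F$ by hypothesis, and $A \cup A = A$ is trivially in the enlarged family. The only content is in the case of $A \cup B$ for $B \in \mathcal F$. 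Here I would split on whether $B \subseteq A$: if so, $A \cup B = A \in \mathcal F \cup \{A\}$; otherwise $A \cup B$ strictly contains $A$ and is nonempty, so by maximality of $A$ in $\mathcal S$ we must have $A \cup B \in \mathcal F$. (Implicitly one uses $A \neq [n]$, which holds because $[n] \in \mathcal F$ and $A \notin \mathcal F$, so $A \cup B$ is still a legitimate subset of $[n]$ to land in $\mathcal F$.)

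I do not foresee any real obstacle; the only subtle point is keeping the empty-set convention consistent when invoking maximality, which is why I would phrase $\mathcal S$ as above rather than simply writing $2^{[n]} \setminus \mathcal F$. Otherwise the proof reduces to a one-line case split on $B \subseteq A$ versus $B \not\subseteq A$.
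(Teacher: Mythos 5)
Your proposal is correct and follows essentially the same route as the paper: take a maximal element $A$ of the complement of $\mathcal F$ and verify union-closedness of $\mathcal F\cup\{A\}$ by splitting on whether $B\subseteq A$. Your extra care with the empty-set convention is a harmless refinement of the same argument.
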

\begin{proof}
Choose $A$ to be a \emph{maximal} subset in the family $2^{[n]} \setminus \mathcal{F}$, i.e.\ if $B \in 2^{[n]} \setminus \mathcal{F}$ and $A \subseteq B$, then $A = B$. We claim that $\mathcal{F} \cup \{A\}$ is union-closed. To see this, choose any $C \in \mathcal{F}$. If $C \subset A$, then $C \cup A = A$, and if $C \not\subset A$, then $A$ is a proper subset of $A \cup C$, which in turn, via the maximality of $A$, implies that $A \cup C \in \mathcal{F}$. Either way, we end up with $A \cup C \in \mathcal{F} \cup \{A\}$, as desired.
\end{proof}


At this point, we define $\mathcal A_{k,n}=\{A\subseteq {[n]}:|A|=k\}$ to be the collection of all subsets of $[n]$ which have cardinality $k$. 
\begin{lemma}\label{sec:2-lem_3}
\label{3}
Let $\mathcal F$ be a union-closed family over the universe $[n]$, with density $k$. Then $0\leqslant k\leqslant n-1$.
\end{lemma}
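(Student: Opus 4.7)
\smallskip

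\noindent\textbf{Proof plan.} The lower bound $k \geqslant 0$ is immediate from the definition of density. For the upper bound $k \leqslant n-1$, the plan is to exhibit an explicit ``sweep upwards'' by cardinality: I will prove by induction on $i \in \{0,1,\dots,n-1\}$ that
\[
\overline{\mathcal F}^{(i)} \supseteq \bigl\{A \subseteq [n] : |A| \geqslant n-i\bigr\}.
\]
Once this containment is established, setting $i = n-1$ gives $\overline{\mathcal F}^{(n-1)} \supseteq \{A \subseteq [n] : |A| \geqslant 1\} = 2^{[n]}$, where the last equality uses the standing convention from the introduction that $\emptyset$ is never included in a family. This forces $k \leqslant n-1$.

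\smallskip

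\noindent The base case $i=0$ is trivial: $\overline{\mathcal F}^{(0)} = \mathcal F$ is union-closed over $[n]$, so it contains $[n]$, and $\{A : |A| \geqslant n\} = \{[n]\}$. For the inductive step, assume the containment holds at level $i$, and fix any $A \subseteq [n]$ with $|A| \geqslant n-i-1$. To show $A \in \overline{\mathcal F}^{(i+1)}$, by the definition in \eqref{closure_defn}, it suffices to check that $\overline{\mathcal F}^{(i)} \cup \{A\}$ is union-closed; since $\overline{\mathcal F}^{(i)}$ is already union-closed by Lemma~\ref{sec:2-lem_1}, the only new unions to verify are those of the form $A \cup C$ with $C \in \overline{\mathcal F}^{(i)}$. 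If $C \subseteq A$ then $A \cup C = A$, which is in $\overline{\mathcal F}^{(i)} \cup \{A\}$. Otherwise $A \cup C \supsetneq A$, so $|A \cup C| \geqslant |A| + 1 \geqslant n-i$, and the inductive hypothesis places $A \cup C$ in $\overline{\mathcal F}^{(i)}$. This completes the induction.

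\smallskip

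\noindent I don't anticipate a serious obstacle: the crux is simply the observation that enlarging a set by a nontrivial union pushes its cardinality strictly upward, and the cardinality ceiling at $n$ bounds the number of cardinality layers we need to fill, namely at most $n-1$ layers beyond $\{[n]\}$ itself. The only subtle point is keeping straight that we are showing membership in $\overline{\mathcal F}^{(i+1)}$ by verifying union-closedness of $\overline{\mathcal F}^{(i)} \cup \{A\}$, rather than trying to argue directly with $\mathcal F$, which makes the induction go through cleanly.
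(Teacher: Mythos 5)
Your proof is correct and follows essentially the same route as the paper: both arguments sweep downward through cardinality layers, showing that one application of the closure absorbs the next layer $\{A : |A| = n-i-1\}$, and your direct verification that $\overline{\mathcal F}^{(i)} \cup \{A\}$ is union-closed is exactly the maximality computation the paper imports from Lemma~\ref{sec:2-lem_2}. Indeed, your induction hypothesis is precisely the paper's Corollary~\ref{sec:2:cor_1}, proved directly.
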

\begin{proof}
Recall the definition of $\overline{\mathcal{F}}^{(i)}$ for any $i \in \mathbb{N}_{0}$. From the definition of universe, we know that $\left\{[n]\right\} = \mathcal A_{n,n}\subseteq \overline{\mathcal{F}}^{(i)}$ for each $i$. Let $t_{i}$ denote the smallest positive integer such that $\mathcal A_{t_{i},n}\subseteq \overline{\mathcal{F}}^{(i)}$. Consequently if a subset $A$ belongs to $\overline{\mathcal{F}}^{(i)} \setminus \mathcal A_{t_{i}-1,n}$, then $A$ has to be a maximal element of $2^{[n]} \setminus \overline{\mathcal{F}}^{(i)}$ and hence, as illustrated in the proof of Lemma~\ref{sec:2-lem_2}, $A\in\overline{\mathcal{F}}^{(i+1)}$. Thus we conclude that $\mathcal A_{t_{i}-1,n}\subseteq\overline{\mathcal{F}}^{(i+1)}$. This in turn shows that $t_{i+1} \leqslant t_{i}-1$ for each $i$ with $t_i\geqslant 2$, and therefore, we must have $t_{n-1} = 1$. This implies that $\mathcal{A}_{t,n} \subset \overline{\mathcal{F}}^{(n-1)}$ for all $t \geqslant 1$, so that $\overline{\mathcal{F}}^{(n-1)} = 2^{[n]}$, thus concluding the proof. 
\end{proof}

Therefore, given any family $\mathcal F$ over the universe $[n]$, we obtain $2^{[n]}$ in at most $n-1$ ``steps" of closures. Now, we have:

\begin{cor}\label{sec:2:cor_1}
Let $\F$ be a union-closed family over the universe $[n]$. Then $\mathcal A_{n-k}\subseteq \overline{\F}^{(k)}$.
\end{cor}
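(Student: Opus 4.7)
The plan is to proceed by induction on $k$, essentially extracting and packaging the invariant $t_{i+1} \leqslant t_i - 1$ that was already the key estimate in the proof of Lemma~\ref{sec:2-lem_3}. The base case $k=0$ is immediate: $\mathcal A_{n,n}=\{[n]\} \subseteq \mathcal F = \overline{\mathcal F}^{(0)}$ by the definition of a union-closed family over the universe $[n]$.

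For the inductive step, suppose $\mathcal A_{n-k,n}\subseteq \overline{\mathcal F}^{(k)}$ and fix any $A\in \mathcal A_{n-k-1,n}$; I aim to show $A\in \overline{\mathcal F}^{(k+1)}$. First I would note that the closure sequence is nested, i.e.\ $\overline{\mathcal F}^{(j)}\subseteq \overline{\mathcal F}^{(k)}$ for all $j\leqslant k$, since each application of the closure operator only adjoins new subsets (this is contained in the very definition \eqref{closure_defn}, and is also witnessed by Lemma~\ref{sec:2-lem_2}). Combining this nestedness with the inductive hypothesis applied at every earlier stage $j\leqslant k$ yields the stronger statement $\mathcal A_{m,n}\subseteq \overline{\mathcal F}^{(k)}$ for every $m\geqslant n-k$.

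Now either $A\in \overline{\mathcal F}^{(k)}\subseteq \overline{\mathcal F}^{(k+1)}$, and there is nothing to prove, or else $A\notin \overline{\mathcal F}^{(k)}$. In the latter case, every proper superset $B$ of $A$ satisfies $|B|\geqslant n-k$, so $B\in \overline{\mathcal F}^{(k)}$ by the preceding observation; hence $A$ is a maximal element of $2^{[n]}\setminus \overline{\mathcal F}^{(k)}$. The argument in the proof of Lemma~\ref{sec:2-lem_2}, applied to the union-closed family $\overline{\mathcal F}^{(k)}$ in place of $\mathcal F$, then shows that $\overline{\mathcal F}^{(k)}\cup\{A\}$ is union-closed, i.e.\ $A\in \overline{\mathcal F}^{(k+1)}$. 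This completes the induction.

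I do not anticipate any real obstacle; the only mildly subtle point is that the naive inductive hypothesis ($\mathcal A_{n-k,n}\subseteq \overline{\mathcal F}^{(k)}$ alone) is not quite strong enough to verify maximality of $A$, so one has to invoke the nestedness of the closure sequence to promote it to ``$\mathcal A_{m,n}\subseteq \overline{\mathcal F}^{(k)}$ for all $m\geqslant n-k$'' before applying the Lemma~\ref{sec:2-lem_2} argument.
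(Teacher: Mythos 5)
Your proof is correct and follows essentially the same route as the paper: the paper's one-line proof simply cites the inequality $t_{i+1}\leqslant t_i-1$ established in the proof of Lemma~\ref{sec:2-lem_3}, and that inequality is proved there by exactly the maximality-plus-Lemma~\ref{sec:2-lem_2} argument you unfold. Your explicit observation that the naive hypothesis must be strengthened to ``$\mathcal A_{m,n}\subseteq\overline{\F}^{(k)}$ for all $m\geqslant n-k$'' before maximality can be verified is a worthwhile clarification of a point the paper leaves implicit in its definition of $t_i$.
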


This follows from the inequality obtained in the proof of the previous lemma, namely $t_{i+1}\leqslant t_i-1$ alongwith the observation that $t_1\leqslant n-1$. We next show that you cannot do better than this $n-1$, i.e.\ there exists at least one family $\mathcal{F}$ with density precisely $n-1$. 

\begin{prop}\label{sec:2-prop_1}
Consider the family $\mathcal F=\{[1], [2], [3],\dots, [n]\}$ over the universe $[n]$. Then $\mathcal F$ is $(n-1)$-dense.
\end{prop}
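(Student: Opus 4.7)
The plan is to combine the upper bound $k \leqslant n-1$ from Lemma~\ref{sec:2-lem_3} with a matching lower bound, showing $\{n\} \notin \overline{\F}^{(n-2)}$. To achieve this, I would introduce the invariant $\phi(A) := \max(A) - |A|$ and establish by induction on $i \in \mathbb{N}_0$ that every $A \in \overline{\F}^{(i)}$ satisfies $\phi(A) \leqslant i$. Once proved, applying this at $i = n-2$ to $A = \{n\}$ (which has $\phi(\{n\}) = n-1$) immediately yields $\{n\} \notin \overline{\F}^{(n-2)}$, hence density at least $n-1$; combined with Lemma~\ref{sec:2-lem_3}, the density is exactly $n-1$.

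The base case $i = 0$ is immediate: $\overline{\F}^{(0)} = \F = \{[1], [2], \ldots, [n]\}$ and $\phi([j]) = j - j = 0$. For the inductive step, I would fix $A \in \overline{\F}^{(i)}$ and suppose for contradiction that $\phi(A) \geqslant i+1$. Write $m = \max(A)$ and $S = [m] \setminus A$ (so $|S| = \phi(A)$), and let $j^\ast = \min S$. By minimality of $j^\ast$ we have $[j^\ast - 1] \subseteq A$, so $A \cup [j^\ast] = A \cup \{j^\ast\}$, and this differs from $A$ since $j^\ast \notin A$. Because $[j^\ast] \in \F \subseteq \overline{\F}^{(i-1)}$ and $\overline{\F}^{(i-1)} \cup \{A\}$ is union-closed by the definition of $\overline{\F}^{(i)}$, I would conclude that $A \cup [j^\ast] \in \overline{\F}^{(i-1)}$. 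But $A \cup [j^\ast]$ has maximum $m$ (since $j^\ast \leqslant m-1$, as $m \in A$ forces $m \notin S$) and cardinality $|A| + 1$, giving
\[
\phi(A \cup [j^\ast]) = m - (|A|+1) = \phi(A) - 1 \geqslant i,
\]
which contradicts the inductive hypothesis applied to $A \cup [j^\ast] \in \overline{\F}^{(i-1)}$.

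The main conceptual step is locating the invariant: $\phi(A) = \max(A) - |A|$ measures exactly how far $A$ is from being an initial segment, and the observation driving the induction is that the test set $C = [j^\ast]$, with $j^\ast$ the smallest element missing from $A$, increases $|A|$ by exactly one while preserving $\max(A)$, thereby decreasing $\phi$ by precisely one. After identifying this quantity and the right choice of $C$, the induction itself is routine.
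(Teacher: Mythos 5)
Your argument is correct, and it takes a genuinely different (in fact stronger) route than the paper's. The paper tracks a single descending chain of explicit non-members: from $\{1,\dots,n-2,n\}\notin\F$ and $[n-2]\in\F$ it deduces $\{1,\dots,n-3,n\}\notin\overline{\F}$, then $\{1,\dots,n-4,n\}\notin\overline{\F}^{(2)}$, and so on down to $\{n\}\notin\overline{\F}^{(n-2)}$; each step uses exactly the move you use, namely taking the union with the initial segment whose top endpoint sits just below the missing element. You instead prove the uniform invariant $\max(A)-|A|\leqslant i$ for every $A\in\overline{\F}^{(i)}$, which excludes all of the paper's witnesses (and many further sets) at once, and then specialize to $A=\{n\}$ at $i=n-2$. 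The details check out: $S=[m]\setminus A$ is nonempty when $\phi(A)\geqslant i+1\geqslant 1$, so $j^\ast$ exists; $j^\ast\leqslant m-1$ since $m\in A$, so the maximum is preserved; and $A\cup[j^\ast]\neq A$, so union-closedness of $\overline{\F}^{(i-1)}\cup\{A\}$ forces $A\cup[j^\ast]\in\overline{\F}^{(i-1)}$, contradicting the inductive hypothesis. What your version buys is a quantitative bound on the whole of $\overline{\F}^{(i)}$ at every level and a fully explicit induction where the paper's ``proceeding thus, by induction'' is left informal; what the paper's version buys is brevity and a direct anticipation of the chain criterion that it later generalizes as Theorem~\ref{main_1} (whose application to this family, noted in the remark following that theorem, is essentially your lower bound in disguise).
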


\begin{proof}
\sloppy First of all, note that $\mathcal{F}$ is union-closed. Next, note that $\{1, 2, 3,\dots, n-2, n\}\notin\mathcal F$ and $\{1, 2, 3,\dots, n-2\}\in\mathcal F$. These together imply that 
\begin{equation}\label{obs_1}
\{1, 2, 3,\dots, n-3, n\}\notin \overline{\mathcal F}.
\end{equation}
Next, we note that $\{1, 2, 3,\dots, n-3\}\in \mathcal{F} \subset \overline{\mathcal F}$, and this, combined with \eqref{obs_1}, yields
\begin{equation}\label{obs_2}
\{1,2,3,\dots,n-4,n\}\notin \overline{\mathcal{F}}^{(2)}.
\end{equation}
Proceeding thus, by induction, we see that $\{n\}\notin \overline{\mathcal{F}}^{(n-2)}$, which implies that $\overline{\mathcal{F}}^{(n-2)} \neq 2^{[n]}$, and hence the density of $\mathcal{F}$ is at least $n-1$. This, along with Lemma~\ref{sec:2-lem_3}, concludes the proof.
\end{proof}

Let $\mathcal U_n$ denote the set of all union-closed families over universe $[n]$. If we construct a graph $G_n$ whose vertex set is $\mathcal U_n$, and draw a directed edge between two families $\mathcal F_1$ and $\mathcal F_2$, directed from $\mathcal F_1$ to $\mathcal F_2$, iff $\overline{\mathcal F_1}=\mathcal F_2$, then, by Lemma~\ref{sec:2-lem_1}, every vertex has exactly one out-degree. By Lemma~\ref{sec:2-lem_2}, there are no self-loops other than at the vertex $2^{[n]}$. By Lemma~\ref{sec:2-lem_3}, the maximum length of a directed path in $\mathcal G_n$ is $n-1$. So, the set of all union-closed families over universe $[n]$ can be imagined as a tree with root $2^{[n]}$ and depth $n-1$.

Our next result focuses on a nice characterization of $1$-dense families. To this end, we require the well known notion of up-sets(see \cite{l}). 
\begin{defn}
 A family $\mathcal F$ over a given finite or infinite universe is said to be an up-set if for every $A\in\mathcal F$ and $B\supseteq A$, we have $B\in\mathcal F$.
\end{defn}

\begin{lemma}\label{sec:2-lem_4}
Let $\mathcal F\neq 2^{[n]}$ be a union-closed family over the universe $[n]$. Then $\mathcal F$ is $1$-dense if and only if $\mathcal F$ is an up-set. 
\end{lemma}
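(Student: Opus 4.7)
The plan is to prove the two directions separately, with the reverse direction being essentially immediate from the definitions and the forward direction requiring a short but careful argument.

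For the sufficiency direction (up-set $\Rightarrow$ $1$-dense), I would fix an arbitrary $A\in 2^{[n]}\setminus\mathcal F$ and verify directly that $\mathcal F\cup\{A\}$ is union-closed. Any pair from $\mathcal F$ is fine by assumption, $A\cup A = A$ is trivial, and for $C\in\mathcal F$ we have $A\cup C\supseteq C\in\mathcal F$, so the up-set property forces $A\cup C\in\mathcal F$. Hence every such $A$ lies in $\overline{\mathcal F}$, giving $\overline{\mathcal F}=2^{[n]}$ and $\mathcal F$ is $1$-dense.

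For the necessity direction ($1$-dense $\Rightarrow$ up-set), I would argue by contraposition. If $\mathcal F$ fails to be an up-set, there exist $A\in\mathcal F$ and $B\supsetneq A$ with $B\notin\mathcal F$. Listing the elements of $B\setminus A$ as $i_1,\dots,i_k$ and considering the chain $A\subsetneq A\cup\{i_1\}\subsetneq\cdots\subsetneq B$, the first set is in $\mathcal F$ and the last is not, so there is a consecutive pair $A''\in\mathcal F$ and $A''\cup\{i\}\notin\mathcal F$ with $i\notin A''$. Now the crucial observation is that $\{i\}\notin\mathcal F$: if $\{i\}\in\mathcal F$, union-closure of $\mathcal F$ would force $A''\cup\{i\}\in\mathcal F$, contradicting its choice.

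Finally, I would show that $\{i\}\notin\overline{\mathcal F}$, contradicting $1$-density. The union of $\{i\}$ with $A''\in\mathcal F$ equals $A''\cup\{i\}$, which lies neither in $\mathcal F$ (by construction) nor in $\{\{i\}\}$ (since $A''$ is nonempty and $i\notin A''$ means $A''\ne\{i\}$). Thus $\mathcal F\cup\{\{i\}\}$ is not union-closed, so $\{i\}\notin\overline{\mathcal F}$, forcing $\overline{\mathcal F}\ne 2^{[n]}$ and contradicting the $1$-density hypothesis. The only place that requires any care is the chain argument producing the single-element extension $A''\cup\{i\}\notin\mathcal F$; once that is extracted, the contradiction via the singleton $\{i\}$ is immediate, so I do not expect any serious obstacle.
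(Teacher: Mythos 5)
Your proof is correct. The sufficiency direction (up-set $\Rightarrow$ $1$-dense) is identical to the paper's: for $A\notin\mathcal F$ and $C\in\mathcal F$, the up-set property applied to $A\cup C\supseteq C$ gives $A\cup C\in\mathcal F$, so $A\in\overline{\mathcal F}$. For the necessity direction the paper argues directly rather than by contraposition: given $A\in\mathcal F$ and $A\subsetneq B$, it observes that $1$-density puts $B\setminus A$ in $\overline{\mathcal F}$, so $B=(B\setminus A)\cup A$ must lie in $\mathcal F\cup\{B\setminus A\}$, and since $A\neq\emptyset$ forces $B\neq B\setminus A$, it concludes $B\in\mathcal F$ in one step. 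Your contrapositive route instead runs a chain argument through $B\setminus A$ to isolate a single-element extension $A''\cup\{i\}\notin\mathcal F$ with $A''\in\mathcal F$, and then exhibits the singleton $\{i\}$ as a witness outside $\overline{\mathcal F}$; all the small checks you flag ($\{i\}\notin\mathcal F$, $A''\cup\{i\}\neq\{i\}$ via the standing convention that $\emptyset$ is excluded from every family) go through. The paper's version is shorter because it tests the single set $B\setminus A$ against the closure rather than descending to a singleton, but your argument has the mild advantage of producing an explicit small set (a singleton) missing from $\overline{\mathcal F}$, which makes the failure of $1$-density very concrete. Both are complete.
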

\begin{proof}
Suppose $\mathcal F$ is $1$-dense. Let $A\in\mathcal F$ and let $A$ be a proper subset of $B$. Since, $\mathcal F$ is $1$-dense, we have $\overline{\mathcal F}=2^{[n]}$, so that $B\setminus A\in\overline{\mathcal{F}}$. Therefore, $\mathcal{F} \cup \{B \setminus A\}$ is union-closed, and hence $B = (B\setminus A)\cup A\in\mathcal F\cup\{B\setminus A\}$. Since $A$ is a non-empty, proper subset of $B$, hence $B \neq B \setminus A$, which yields $B \in\mathcal F$. Hence $\mathcal F$ is an up-set. 

Conversely, suppose that $\mathcal F$ is an up-set. Let $B\in 2^{[n]}-\mathcal F$. Take any $A\in\mathcal F$. Note that $A\cup B\supseteq A$, so that $A\cup B\in\mathcal F$ as $\mathcal{F}$ is an up-set. Therefore, $B\in\overline{\mathcal F}$, thus proving that $\overline{\mathcal F}=2^{[n]}$.
\end{proof}

It is well-known (see for e.g. Introduction of \cite{a}) that up-sets satisfy Conjecture~\ref{main_conj}. By Lemma~\ref{sec:2-lem_4}, we conclude that $1$-dense families satisfy it as well. Using Lemma~\ref{sec:2-lem_4}, we establish Theorem~\ref{main_1}.
\begin{theorem}\label{main_1}
Let $\mathcal F$ be a $k$-dense union-closed family. If there exist subsets $A_1\subsetneq A_2\subsetneq\dots\subsetneq A_r\subsetneq B_r$ of $[n]$ with $A_1, A_2,\dots,A_r\in\mathcal F$ and $B_r\notin\mathcal F$, then $r<k$.
\end{theorem}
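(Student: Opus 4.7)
The plan is to establish the stronger statement that $\overline{\mathcal{F}}^{(r)} \neq 2^{[n]}$, which by the definition of density immediately gives $k \geq r+1$, i.e.\ $r < k$. Thus I will exhibit an explicit subset of $[n]$ that does not belong to $\overline{\mathcal{F}}^{(r)}$.

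The witnesses I would use are defined by setting $A_0 := \emptyset$ and, for $j = 0, 1, \ldots, r$,
\[ B^{(j)} := A_j \cup (B_r \setminus A_r), \]
so that $B^{(r)} = B_r$ and $B^{(0)} = B_r \setminus A_r$. Using $A_j \subseteq A_{j+1}$ together with the disjointness of $B_r \setminus A_r$ from $A_r$, a short calculation yields the key identity $A_{j+1} \cup B^{(j)} = B^{(j+1)}$, along with $B^{(j+1)} \neq B^{(j)}$; the latter follows from $A_j \subsetneq A_{j+1}$, noting that even at $j=0$ we have $\emptyset \subsetneq A_1$ because the paper excludes $\emptyset$ from every family.

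I would then prove by downward induction on $j \in \{r, r-1, \ldots, 0\}$ that $B^{(j)} \notin \overline{\mathcal{F}}^{(r-j)}$. The base case $j = r$ is immediate since $B^{(r)} = B_r \notin \mathcal{F} = \overline{\mathcal{F}}^{(0)}$. For the step from $j+1$ to $j$, suppose for contradiction that $B^{(j)} \in \overline{\mathcal{F}}^{(r-j)} = \overline{\overline{\mathcal{F}}^{(r-j-1)}}$; then $\overline{\mathcal{F}}^{(r-j-1)} \cup \{B^{(j)}\}$ is union-closed by the definition of closure. Since $A_{j+1} \in \mathcal{F} \subseteq \overline{\mathcal{F}}^{(r-j-1)}$, the identity above forces $B^{(j+1)} = A_{j+1} \cup B^{(j)}$ to lie in this enlarged family, but the inductive hypothesis excludes $B^{(j+1)} \in \overline{\mathcal{F}}^{(r-j-1)}$ and the inequality $B^{(j+1)} \neq B^{(j)}$ excludes the other alternative. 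Taking $j = 0$ yields $B_r \setminus A_r \notin \overline{\mathcal{F}}^{(r)}$, as required.

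The only creative step is guessing the witnesses $B^{(j)}$; their definition is essentially forced by the requirement that the obstruction propagates from closure level $r-j-1$ up to level $r-j$ through a single union with $A_{j+1} \in \mathcal{F}$. Once that form is written down, the verification is a routine unpacking of the closure definition, and the strictness of the chain hypothesis is used in exactly one place, namely to guarantee $B^{(j+1)} \neq B^{(j)}$. The main obstacle, then, is recognizing this telescoping structure; with the witnesses in hand, the rest is mechanical.
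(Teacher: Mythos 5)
Your proof is correct and is essentially the paper's argument unrolled: the witnesses $B^{(j)} = A_j \cup (B_r\setminus A_r)$ are exactly the sets the paper produces one step at a time in its induction on the density, and the exclusion mechanism (taking the union with $A_{j+1}\in\mathcal F$ forces the forbidden set $B^{(j+1)}$ into the enlarged family, with strictness of the chain ruling out $B^{(j+1)}=B^{(j)}$) is identical. The only difference is packaging: you run a direct downward induction on the chain index and conclude $\overline{\mathcal F}^{(r)}\neq 2^{[n]}$, whereas the paper inducts on the density and settles the base case via the up-set characterization of $1$-dense families (Lemma~\ref{sec:2-lem_4}), an ingredient your version does not need.
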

\begin{proof} We use induction on the density of $\mathcal F$. First, consider $\mathcal  F$ that is $1$-dense. By Lemma~\ref{sec:2-lem_4}, it is an up-set. Hence $A\in\mathcal F$ and $B\notin\mathcal F$ imply that $A\nsubseteq B$. Therefore, $r=0$. This concludes the proof of the claim for the base case. 

Assume that the result holds for $t$-dense families for some $t \in \mathbb{N}$ with $t \leqslant n-2$. Let $\mathcal F$ be a $t+1$-dense family. Suppose there exist $A_{1}, \ldots, A_{r}$ and $B_{r}$ as in the statement of Theorem~\ref{main_1}. Now, $A_r\in\mathcal F$ and $A_{r-1}\cup(B_r \setminus A_r)\cup A_r=B_r\notin\mathcal F$ together imply that $A_{r-1}\cup(B_r \setminus A_r)\notin\overline{\mathcal F}$. Therefore we get $A_1\subsetneq A_2\subsetneq\dots\subsetneq A_{r-1}\subsetneq A_{r-1}\cup(B_r \setminus A_r)$ with $A_1, A_2,\dots,A_{r-1}\in\overline{\mathcal F}$ and $A_r\cup(B_r-A_r)\notin\overline{\mathcal F}$. Moreover, since $\mathcal F$ is $t+1$-dense, we know that $\overline{\mathcal F}$ is $t$-dense. By our induction hypothesis, we have $r-1<t$ which yields $r<t+1$, as desired.  
\end{proof}

\begin{remark}
Letting $\mathcal F=\{[1], [2], [3],\dots, [n]\}$, $A_i=\{[i]\}$ for $1 \leqslant i \leqslant n-2$ and $B_{n-2}=\{1, 2,\dots, n-2, n\}$, Theorem \ref{main_1} yields the density of $\mathcal F$ to be at least $n-1$, which corroborates the conclusion of Proposition~\ref{sec:2-prop_1}. This also leads to the following, far more general observation: if $\mathcal F$ is a union-closed family such that $[i]\in\mathcal F$ for all $i\in[n]$ and $\{1, 2,\dots, n-2, n\}\notin\mathcal F$, then $\mathcal F$ is $(n-1)$-dense. 
\end{remark}

\sloppy Theorem~\ref{main_1} shows that there cannot exist a ``very long" chain of subsets of the type described in the statement of the theorem, especially when the density of $\mathcal F$ is ``small" compared to $n$, the size of the universe. Let 
$$s(\F)=\max\{r:\exists A_1\subsetneq A_2\subsetneq\dots\subsetneq A_r\subsetneq B_r, A_i\in\F, B_r\notin\F\}. $$
For a $k-dense$ family $\F$, Theorem~\ref{main_1} gives us the lower bound $s(\F)+1\leqslant k$. We note here that this bound is not always tight. As an instance, consider the union-closed family
$$\mathcal F=\{[n-2], \{1, 2, 4, 5, 6,\dots, n\}, \{1, 3, 4, 5,\dots, n\}, \{2, 3,\dots, n\}, [n]\}.$$
It is straightforward to check that $2^{[n-2]}\subseteq\overline{\mathcal F}$ and $\{1, 2, 3,\dots, n-3, n\}\notin\overline{\mathcal F}$. Setting $A_i=[i]$ for $1 \leqslant i \leqslant n-4$ and $B_{n-3}=\{1, 2, 3,\dots, n-3, n\}$, we conclude, by Theorem~\ref{main_1}, that $\overline{\mathcal F}$ is at least $(n-2)$-dense. This, along with Lemma~\ref{sec:2-lem_3}, yields that $\mathcal F$ is $(n-1)$-dense. However, the only choice of subsets $A$ and $B$ such that $A \in \mathcal{F}$, $B \notin \mathcal{F}$ and $A \subsetneq B$ is $A = [n-2]$ and $B = [n-1]$, showing that $s(\F)=1$, which is a lot smaller than $n-1$.

\section{Many $k-$dense families}\label{sec:k}

Let $k$ be a fixed positive integer and $\F$ be a union -closed family over universe $[k]$. In this section, we will consider the family $\H=\F\cup\{[n]\}$ for $n\geqslant k+2$. Throughout this section, whenever $A\in 2^{[n]}$, by $A_1,$ we will mean $A\cap [k]$ and by $A_2$, we will mean $A\cap([n]\setminus[k])$.  Our goal is to prove that $\H$ is $(k+1)-$dense. We begin with:

\begin{lemma}\label{mlemma_1}
For any $A\in 2^{[n]}$ and for any $1\leqslant t\leqslant k$, at least one of the following is true:
\begin{enumerate}[label=\textup{(\roman*)}, noitemsep]
    \item\label{mi} $A_2=\emptyset$, $A_1\in\overline{\F}^{(t)}$.
    \item\label{mii} $A_2=\emptyset$, $A_1\notin\overline{\F}^{(t)}$.
    \item\label{miii} $A_2=[n]\setminus[k]$ and $\forall E\in\overline{\F}^{(t-1)}$ with $E\nsubseteq A$, $E\cup A\in\overline{\H}^{(t-1)}$.
    \item\label{miv} $A_2=[n]\setminus [k]$ and $\exists E\in\overline{\F}^{(t-1)}$ with $E\nsubseteq A$, $E\cup A\notin\overline{\H}^{(t-1)}$.
    \item\label{mv} $|A_1|\geqslant k-t+1$.
    \item\label{mvi} $A_2\neq\emptyset, A_2\neq[n]\setminus[k]$ and $|A_1|\leqslant k-t$.
\end{enumerate}
where we put $\overline{\F}^{0}=\F$ and $\overline{\H}^{(0)}=\H$.
\end{lemma}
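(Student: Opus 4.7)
The plan is to observe that the statement is essentially a tautological case analysis, organized around the value of $A_2 = A \cap ([n] \setminus [k])$. Since $A_2$ is a subset of $[n] \setminus [k]$, exactly one of three mutually exclusive possibilities holds: either $A_2 = \emptyset$, or $A_2 = [n] \setminus [k]$, or $A_2$ is a proper non-empty subset of $[n] \setminus [k]$. I would handle these three sub-cases in turn and show that in each one, at least one of the six listed alternatives is satisfied.

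In the first sub-case ($A_2 = \emptyset$), the question reduces to whether or not $A_1 \in \overline{\F}^{(t)}$; the two answers correspond exactly to alternatives \ref{mi} and \ref{mii}, which are logical complements under the assumption $A_2 = \emptyset$. In the second sub-case ($A_2 = [n] \setminus [k]$), I would split on whether the universal statement ``for every $E \in \overline{\F}^{(t-1)}$ with $E \not\subseteq A$ one has $E \cup A \in \overline{\H}^{(t-1)}$'' holds; either it does, giving \ref{miii}, or its negation holds, giving \ref{miv}. These two again form a complementary dichotomy under $A_2 = [n] \setminus [k]$, so one of them must be true.

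In the third sub-case ($A_2$ is a non-empty proper subset of $[n] \setminus [k]$), I would note that $|A_1|$ is a non-negative integer and $k - t$ is an integer, so either $|A_1| \le k-t$ or $|A_1| \ge k - t + 1$. The first possibility, combined with the case hypothesis on $A_2$, gives alternative \ref{mvi}; the second gives alternative \ref{mv} (observe that \ref{mv} imposes no restriction on $A_2$, so it applies regardless of the case). Combining the three sub-cases covers every $A \in 2^{[n]}$.

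There is no real obstacle here beyond bookkeeping; the lemma is constructed so that the six alternatives precisely exhaust the possibilities once one fixes the decomposition $A = A_1 \cup A_2$ and reads the conditions as dichotomies over the three $A_2$-cases. The value of the lemma will lie not in its proof but in its use as a framework for subsequent inductive arguments about $\overline{\H}^{(t)}$, where each of the six cases must be treated by a different mechanism.
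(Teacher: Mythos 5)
Your proposal is correct and matches the paper's proof exactly: both argue by trichotomy on $A_2$ (empty, all of $[n]\setminus[k]$, or a proper non-empty subset) and observe that within each case the two listed alternatives form a complementary dichotomy. Your write-up is in fact more careful than the paper's one-line version, but the content is identical.
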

\begin{proof}
When $A_2=\emptyset$ then $A$ satisfies one of $1$ or $2$. When $A_2=[n]\setminus[k]$ then $A$ satisfies one of $3$ or $4$. When $A_2\neq\emptyset, A_2\neq[n]\setminus[k]$ then satisfies $A$ satisfies $5$ or $6$.
\end{proof}
Using the previous Lemma, we prove the following:
\begin{theorem}\label{mtheorem_1}
Given $A\in 2^{[n]}$ and a positive integer $t\leqslant k$, if $A$ satisfies any of the conditions stated in \ref{mi}, \ref{miii}, \ref{mv} of Lemma~\ref{mlemma_1} then $A\in\overline{\H}^{(t)}$. On the other hand,if $A$ satisfies any of the conditions stated in \ref{mii}, \ref{miv}, \ref{mvi} of Lemma~\ref{mlemma_1} then $A\notin\overline{\H}^{(t)}$.
\end{theorem}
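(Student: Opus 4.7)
The plan is to induct on $t$ over the range $1 \leqslant t \leqslant k$. Lemma~\ref{mlemma_1} splits the six conditions into a positive triple \ref{mi}/\ref{miii}/\ref{mv} and a negative triple \ref{mii}/\ref{miv}/\ref{mvi}; at each level I will verify that the positive conditions give membership in $\overline{\H}^{(t)}$ and the negative ones give non-membership, yielding a complete characterization that feeds directly into the induction at level $t+1$.

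For the base case $t=1$, $\overline{\H}^{(0)} = \H = \F \cup \{[n]\}$ and each of the six conditions can be handled by direct inspection. I will reduce \ref{mi}/\ref{mii} to the definition of $\overline{\F}$; for \ref{miii}/\ref{miv}, I will exploit that the only obstructions to union-closedness of $\H \cup \{A\}$ when $A_2 = [n] \setminus [k]$ come from elements of $\F = \overline{\F}^{(0)}$, since $A \cup [n] = [n] \in \H$ automatically. For \ref{mv} with $t=1$, the condition $|A_1| \geqslant k$ forces $A_1 = [k]$, so $A \cup B = A$ for every $B \in \F$, and for \ref{mvi} the set $[k] \in \F$ satisfies $A \cup [k] \notin \H$, which witnesses $A \notin \overline{\H}$.

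For the inductive step at $t \geqslant 2$, I will assume the theorem holds for all $s \leqslant t-1$. To establish a positive condition, I will fix an arbitrary $G \in \overline{\H}^{(t-1)}$, use the inductive hypothesis to classify $G$ via \ref{mi}, \ref{miii}, or \ref{mv} at level $t-1$, and verify that $A \cup G$ either equals $A$ or satisfies one of those positive conditions at level $t-1$. Most of the nine resulting sub-cases are bookkeeping: when $G$ satisfies \ref{mv} at $t-1$, the size bound $|G_1| \geqslant k-t+2$ is inherited by $A \cup G$; the \ref{mi} $\times$ \ref{mi} sub-case is exactly the union-closedness of $\overline{\F}^{(t-1)} \cup \{A_1\}$ guaranteed by $A_1 \in \overline{\F}^{(t)}$; and sub-cases pairing \ref{mv} with \ref{mi} or \ref{miii} reduce to the same size computation. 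For the negative direction, concrete witnesses will be produced: in \ref{mii}, any $G \in \overline{\F}^{(t-1)}$ with $G \cup A_1 \notin \overline{\F}^{(t-1)}$ (which exists because $A_1 \notin \overline{\F}^{(t)}$) places $A \cup G$ into \ref{mii} at $t-1$; in \ref{miv}, the set $E$ furnished by the hypothesis serves as the witness directly; in \ref{mvi}, Corollary~\ref{sec:2:cor_1} supplies a $G \subseteq [k]$ of size $k-t+1$ containing $A_1$, and then $A \cup G$ satisfies \ref{mvi} at level $t-1$ and thus lies outside $\overline{\H}^{(t-1)}$ by the inductive hypothesis.

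The hard part will be the \ref{miii} $\times$ \ref{miii} sub-case in the positive direction: both $A$ and $G$ have second component $[n]\setminus[k]$, and I need $A \cup G$ to satisfy \ref{miii} at level $t-1$, i.e., to show that for every $E \in \overline{\F}^{(t-2)}$ with $E \nsubseteq A \cup G$ we have $E \cup A \cup G \in \overline{\H}^{(t-2)}$. Using $G$'s inner condition I will obtain $E \cup G \in \overline{\H}^{(t-2)}$, and using $A$'s \ref{miii} at level $t$ applied to $E \in \overline{\F}^{(t-2)} \subseteq \overline{\F}^{(t-1)}$ I will obtain $E \cup A \in \overline{\H}^{(t-1)}$. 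Splicing these into membership of $E \cup A \cup G$ in $\overline{\H}^{(t-2)}$ appears to require a synchronized recursive argument, leveraging that $E \cup A_1 \in \overline{\F}^{(t-1)}$ (from $A_1 \in \overline{\F}^{(t)}$ together with $E \nsubseteq A_1$) and then re-invoking the inductive hypothesis at lower levels. This is where I expect the main technical difficulty to lie, and potentially where a minor strengthening of the inductive statement (for instance, proving \ref{miii} and \ref{mv} at level $t$ jointly with a companion claim tracking $\overline{\H}^{(t-2)}$) will be needed.
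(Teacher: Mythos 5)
Your overall strategy---induction on $t$, with the six conditions of Lemma~\ref{mlemma_1} split into a positive and a negative triple and both directions proved simultaneously so that the level-$(t-1)$ characterization of $\overline{\H}^{(t-1)}$ can be fed into level $t$---is exactly the paper's, and your base case and the negative cases \ref{mii}, \ref{miv}, \ref{mvi} go through as you describe. But the difficulty you flag at the end is a genuine gap, and the strengthening you guess at (``proving \ref{miii} and \ref{mv} jointly with a companion claim tracking $\overline{\H}^{(t-2)}$'') is not the one that works. The missing idea is to add to the induction a seventh statement: the family $\mathcal C_t=\{A\in\overline{\H}^{(t)}:A_2=[n]\setminus[k]\}$ is an \emph{up-set}. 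With this in hand, whenever $B\in\overline{\H}^{(t-1)}$ satisfies \ref{miii} one never attempts to verify the inner quantified condition for $A\cup B$ at all (which is where your ``synchronized recursive argument'' gets stuck); one simply observes $A\cup B\supseteq B\in\mathcal C_{t-1}$, hence $A\cup B\in\mathcal C_{t-1}\subseteq\overline{\H}^{(t-1)}$. The up-set claim itself propagates cleanly: by the level-$t$ statements for \ref{miii} and \ref{miv}, a set $A$ with $A_2=[n]\setminus[k]$ lies in $\mathcal C_t$ if and only if $E\cup A\in\overline{\H}^{(t-1)}$ for every $E\in\overline{\F}^{(t-1)}$ with $E\nsubseteq A$, and enlarging $A$ only shrinks the set of $E$'s to be checked, while the conclusion for the enlarged set follows from the up-set property of $\mathcal C_{t-1}$ one level down.

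Note also that this auxiliary lemma is needed in more places than the \ref{miii}$\times$\ref{miii} sub-case you single out. Your claim that sub-cases pairing \ref{mv} with \ref{mi} or \ref{miii} ``reduce to the same size computation'' fails at the boundary: if $A$ satisfies \ref{mv} with $|A_1|=k-t+1$ exactly and $B$ satisfies \ref{miii} at level $t-1$ with $B_1\subseteq A_1$, then $|(A\cup B)_1|=k-t+1$, which is too small to invoke \ref{mv} at level $t-1$ (that requires $|(A\cup B)_1|\geqslant k-t+2$), and $A\cup B$ equals neither $A$ nor anything covered by \ref{mi}; the only escape is again $A\cup B\supseteq B\in\mathcal C_{t-1}$ together with the up-set property. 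So without explicitly formulating the up-set statement and carrying it through the induction, the proof as proposed does not close.
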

\begin{proof}
The proof is via induction on $t$. Let $\mathcal C_t=\{A\in\overline{\H}^{(t)}:A_2=[n]\setminus[k]\}$. And put $M=\{\ref{mi}, \ref{miii}, \ref{mv}\}$ and $N=\{\ref{mii}, \ref{miv}, \ref{mvi}\}$. For $q\in M$ let $P(l,q)$ be the statement that: if $A$ satisfies $q$ of Lemma~\ref{mlemma_1} for $t=l$ then $A\in\overline{\H}^{(l)}$. And for $q\in N$ let $P(l,q)$ be the statement that: if $A$ satisfies $q$ of Lemma~\ref{mlemma_1} for $t=l$ then $A\notin\overline{\H}^{(l)}$. Finally, let $P(l,\text{(vii)})$ be the statement that: $\mathcal C_l$ is an up-set. To prove this theorem, we will prove the following statement:
$$P(t)=\bigwedge_{q\in M\cup N\cup\{\text{(vii)}\}}P(t,q)$$
for $1\leq t\leq k$. Let us first establish the base case, i.e., $P(1)$:
\begin{enumerate}
    \item We need to show that $\H\cup\{A\}$ is union-closed. But this is true since $A\in\overline{\F}^{(1)}$ and $\H=\F\cup\{[n]\}$.
    \item Since $A\notin\overline{\F}$, there is a $B\in\F$ such that $A\cup B\notin\F\cup\{A\}$. Since, $A\cup B\subseteq[k]$, we obtain that $A\cup B\notin\H\cup\{A\}\Rightarrow A\notin\overline{\H}$ as required. 
    \item We need to show that $\H\cup\{A\}$ is union-closed. Since $B\in\H$ means that either $B\in\F$ or $B=[n],$ we have that $A\cup B\in \H\cup\{A\}$ in both the cases.
    \item When $A$ satisfies \ref{iv} of Lemma~\ref{mlemma_1} then there is an $E\in\F$ with $E\nsubseteq A$ such that $E\cup A\notin\H$. But since $E\in\F\Rightarrow E\in\H,$ we get that $A\notin\overline{\H}$.
    \item Since $t=1,$ the only choice for $A_1$ is $[k]$. It is easy to see that $A\in\overline{\H}$.
    \item Since $[k]\in\H$ and $A_2\neq\emptyset$ and $A_2\neq [n]\setminus[k]$, we have that $[k]\cup A\notin\H$ and hence, $A\notin\overline{\H}$.
    \item Note that by $P(1,\ref{miii})$ and $P(1,\ref{miv})$, we have that when $A_2=[n]\setminus[k]$ then $A\in\overline{\H}\Leftrightarrow\forall E\in\F$ with $E\nsubseteq A$, $E\cup A\in\H$. So suppose we have $A\in\mathcal C_1$ with $A\subseteq B$. Given $E\in\F$ with $E\nsubseteq B$, we also obtain that $E\nsubseteq A$. Therefore, $E\cup A\in\H$. Since, $A_2=[n]\setminus[k]$, we have that $E\cup A=[n]\Rightarrow E\cup B=[n]\in\H\Rightarrow B\in\mathcal C_1$. And hence, $\mathcal C_1$ is an up-set.
\end{enumerate}
Now, we suppose that $P(r)$ is true for some $1\leqslant r<k$. Let us prove $P(r+1)$:
\begin{enumerate}
\item We are given that $A\in\overline{\F}^{(r+1)}$. We have to investigate $A\cup B$ for $B\in\overline{\H}^{(r)}$. By Lemma~\ref{mlemma_1} and the induction hypothesis, the only cases are: when $B$ satisfies \ref{mi}, \ref{miii}, \ref{mv} of Lemma~\ref{mlemma_1}. We now consider all these cases:
\begin{enumerate}[noitemsep]
    \item Suppose $B$ satisfies \ref{mi} of Lemma~\ref{mlemma_1}. Thus, by the definition of closure, $A\cup B\in\overline{\F}^{(r)}\cup\{A\}$. If $A\cup B=A$ then there is nothing further to investigate. On the other hand, if $A\cup B\in\overline{\F}^{(r)}$ then by $P(r,\ref{mi})$, we have that $A\cup B\in\overline{\H}^{(r)}$, as required.
    \item Suppose $B$ satisfies \ref{miii} of Lemma~\ref{mlemma_1}. Thus by $P(r,\ref{miii})$ we have that $B\in\mathcal C_r$. Therefore by $P(r,\text{(vii)})$, $\mathcal C_r$ is an up-set and hence $A\cup B\in\mathcal C_r\subseteq\overline{\H}^{(r)}$ as required.
    \item Suppose $B$ satisfies \ref{mv} of Lemma~\ref{mlemma_1}. Therefore, $|B_1|\geqslant k-r+1$. And hence, $|(A\cup B)_1|\geqslant k-r+1\Rightarrow A\cup B\in\overline{\H}^{(r)}$.
\end{enumerate}
\item Since $A\notin\overline{\F}^{(r+1)}$, there is a $B\in\overline{\F}^{(r)}$ such that $A\cup B\notin\overline{\F}^{(r)}\cup\{A\}$. By $P(r,\ref{mi})$, we obtain that $B\in\overline{\H}^{(r)}$ and by $P(r,\ref{mi})$, we have $A\cup B\notin\overline{\H}^{(r)}$. Therefore, $A\notin\overline{\H}^{(r+1)}$ as required.  
\item Suppose $A$ satisfies \ref{miii} of Lemma~\ref{mlemma_1}.  We have to investigate $A\cup B$ for $B\in\overline{\H}^{(r)}$. By Lemma~\ref{mlemma_1} and the induction hypothesis, the only cases are when $B$ satisfies \ref{mi}, \ref{miii}, \ref{mv} of Lemma~\ref{mlemma_1}. We now consider all these cases:
\begin{enumerate}[noitemsep]
\item When $B$ satisfies \ref{mi} of the Lemma~\ref{mlemma_1} then $B\in\overline{\F}^{(r)}$ and thus, $B\cup A\in\overline{\H}^{(r)}\cup\{A\}$, as required.
\item Suppose $B$ satisfies \ref{miii} of the Lemma~\ref{mlemma_1}. Thus by $P(r,\ref{miii})$ we have that $B\in\mathcal C_r$. Therefore by $P(r,\text{(vii)})$, $\mathcal C_r$ is an up-set and hence $A\cup B\in\mathcal C_r\subseteq\overline{\H}^{(r)}$ as required.
\item Suppose $B$ satisfies \ref{mv} of Lemma~\ref{mlemma_1}. Therefore, $|B_1|\geqslant k-r+1$. And hence, $|(A\cup B)_1|\geqslant k-r+1\Rightarrow A\cup B\in\overline{\H}^{(r)}$.
\end{enumerate}
\item We are given that $A$ satisfies \ref{miv} of Lemma~\ref{mlemma_1}. Now, since $E\in\overline{\F}^{(r)}$, by $P(r,\ref{i})$, we have that $E\in\overline{\H}^{(r)}$. Therefore, $A\notin\overline{\H}^{(r+1)}$ as required.
\item We are given that $|A_1|\geqslant k-(r+1)+1=k-r$. If $|A_1|\geqslant k-r+1$ then by the induction hypothesis, $A\in\overline{\H}^{(r)}$ and hence there would be nothing further to investigate. Therefore, we only need to look at the case where $|A|=k-r$. If $B\in\overline{\H}^{(r)}$ is such that $B_1\nsubseteq A_1$ then $|(A\cup B)_1|\geqslant k-r+1$ and hence $A\cup B\in\overline{\H}^{(r)}$ by $P(r,\ref{mv})$. On the other hand, if $B_1\subseteq A_1$ then by $P(r,\ref{mvi})$, either $B_2=\emptyset$ or $B_2=[n]\setminus[k]$. If $B_2=\emptyset$ then $A\cup B=B$ and finally if $B_2=[n]\setminus[k]$ then $B\in\mathcal C_r$ and since $\mathcal C_r$ is an up-set by $P(r,\text{(vii)})$, we get that $A\cup B\in\mathcal C_r\subseteq\overline{\H}^{(r)}$, as required.
\item Suppose $A$ satisfies \ref{mvi} of Lemma~\ref{mlemma_1}. Using Corollary~\ref{sec:2:cor_1} we have, $\mathcal A_{k-r,k}\subseteq\overline{\F}^{(r)}$. Let us consider a set $B\in\mathcal A_{k-r,k}$ with $A_1\subseteq B$. By $P(r, \ref{mi})$, this means that $B\in\overline{\H}^{(r)}$. And note that $|(A\cup B)_1|=k-r$ with $(A\cup B)_2=A_2$. Thus, by $P(r,\ref{mvi})$, $A\cup B\notin\overline{\H}^{(r)}$ and clearly $A\cup B\neq A$ since $|A_1|\leqslant k-r-1$. Therefore, $A\notin\overline{\H}^{(r+1)},$ as required.
\item Suppose $A\in\mathcal C_{r+1}$ and $A\subseteq B$. By $P(r+1,\ref{miii})$ and $P(r+1,\ref{miv})$, we have that when $A_2=[n]\setminus[k]$ then
$$ A\in\mathcal C_{r+1}\Leftrightarrow \forall E\in\overline{\F}^{(t)} \text{ with } E\nsubseteq A_1, E\cup A\in\overline{\H}^{(t)}.$$
We need to show that $B\in\mathcal C_{r+1}$. Suppose we have $E\in\overline{\F}^{(t)}$ with $E\nsubseteq B_1$. Since, $A_1\subseteq B_1,$ we also have that $E\nsubseteq A_1$. Therefore, $E\cup A\in\overline{\H}^{(t)}$. This means, by definition, that $E\cup A\in\mathcal C_r$. Using $P(r,\text{(vii)})$, this gives us that $E\cup B\in\mathcal C_r\subseteq\overline{\H}^{(r)}$, thus proving that $\mathcal C_{r+1}$ is an up-set.  
\end{enumerate}

\end{proof}
Finally, let us show that: 
\begin{cor}\label{mcor}
$\H$ is $(k+1)-$dense
\end{cor}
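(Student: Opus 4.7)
The plan is to establish both bounds on the density of $\H$: the lower bound $\overline{\H}^{(k)} \neq 2^{[n]}$ and the matching upper bound $\overline{\H}^{(k+1)} = 2^{[n]}$. For the lower bound, I would simply exhibit a missing set: since $n \geq k+2$, the singleton $A = \{k+1\}$ satisfies $A_1 = \emptyset$ with $A_2$ a proper non-empty subset of $[n]\setminus[k]$, which puts it into condition~\ref{mvi} of Lemma~\ref{mlemma_1} at level $t=k$, so Theorem~\ref{mtheorem_1} forces $A \notin \overline{\H}^{(k)}$.

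For the upper bound my plan is to show that $\overline{\H}^{(k)}$ is an up-set, from which Lemma~\ref{sec:2-lem_4} immediately yields $\overline{\H}^{(k+1)} = 2^{[n]}$. Given $A \in \overline{\H}^{(k)}$ and $A \subseteq B$ with $B$ non-empty, I would case-split on $B_2$. If $B_2 = \emptyset$, then $B \subseteq [k]$, and Lemma~\ref{sec:2-lem_3} applied to $\F$ over the universe $[k]$ gives $\overline{\F}^{(k)} = 2^{[k]} \ni B$, so condition~\ref{mi} places $B$ in $\overline{\H}^{(k)}$. If $B_2$ is a proper non-empty subset of $[n]\setminus[k]$, then $B_1$ cannot be empty, for otherwise $A \subseteq B$ would force $A_1 = \emptyset$ and $A$ itself would satisfy condition~\ref{mvi}, contradicting $A \in \overline{\H}^{(k)}$; hence $|B_1| \geq 1$ and condition~\ref{mv} applies.

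The remaining case $B_2 = [n]\setminus[k]$ is the main obstacle, and I would handle it via the following sub-claim, proved by induction on $t$: for each $0 \leq t \leq k$, any $C$ with $C_2 = [n]\setminus[k]$ and $|C_1| \geq k - t$ belongs to $\overline{\H}^{(t)}$. The base $t = 0$ is immediate, since $|C_1| \geq k$ forces $C = [n] \in \H$. For the inductive step I would verify condition~\ref{miii} of Lemma~\ref{mlemma_1} at level $t$: for any $E \in \overline{\F}^{(t-1)}$ with $E \nsubseteq C_1$, the set $E \cup C$ has $(E \cup C)_2 = [n]\setminus[k]$ and $|(E \cup C)_1| > |C_1| \geq k - t$, hence $|(E \cup C)_1| \geq k - (t-1)$, so the inductive hypothesis places $E \cup C$ in $\overline{\H}^{(t-1)}$. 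Specialising the sub-claim to $t = k$ (where the cardinality hypothesis $|C_1| \geq 0$ is vacuous) shows that every $B$ with $B_2 = [n]\setminus[k]$ lies in $\overline{\H}^{(k)}$, completing the up-set check.

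The key difficulty I expect is recognising the right quantitative strengthening (the bound $|C_1| \geq k - t$) to make the induction go through; once this is identified, the step is essentially forced by condition~\ref{miii} of Lemma~\ref{mlemma_1}, and everything else reduces to case-analysis bookkeeping with Lemma~\ref{mlemma_1} and Theorem~\ref{mtheorem_1}.
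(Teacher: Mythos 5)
Your proof is correct and follows essentially the paper's route: both arguments show that $\overline{\H}^{(k)}$ is a proper up-set (using Theorem~\ref{mtheorem_1} to locate its complement inside $2^{[n]\setminus[k]}$) and then invoke Lemma~\ref{sec:2-lem_4}. The only place you do more than the paper is the case $B_2=[n]\setminus[k]$, where your quantitative sub-claim (a fresh induction through condition~\ref{miii}) establishes that every such set lies in $\overline{\H}^{(k)}$, whereas the paper sidesteps deciding this by observing that the complement is a down-set whether or not it contains $[n]\setminus[k]$ — your extra induction is valid but not needed.
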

\begin{proof}
Let us consider $2^{[n]}\setminus\overline{\H}^{(k)}$. Suppose $A\in 2^{[n]}\setminus\overline{\H}^{(k)}$. Then by $P(k,\ref{mi})$ and $P(k,\ref{mii})$, $A\notin 2^{[k]}$. By $P(k,\ref{mv})$, $A_1=\emptyset$. By $P(k,\ref{mvi})$, whenever $A_2\neq\emptyset$ and $A_2\neq[n]\setminus[k]$, we have $A\in 2^{[n]}\setminus\overline{\H}^{(k)}$. Thus, the only possibilities for $2^{[n]}\setminus\overline{\H}^{(k)}$ are either $2^{[n]\setminus[k]}$ or $2^{[n]\setminus[k]}\setminus\{[n]\setminus[k]\}$. In either case, $2^{[n]}\setminus\overline{\H}^{(k)}$ is a down set and hence $\overline{\H}^{(k)}$ is an up-set. And since $\overline{\H}^{(k)}\neq 2^{[n]}$, we have that by Lemma~\ref{sec:2-lem_4} $\overline{\H}^{(k)}$ is $1-$dense and hence $\H$ is $(k+1)-$dense. 
\end{proof}
\begin{remark}
If $f_{k}$ is the number of labelled union-closed families with universe $k$ the Corollary~\ref{mcor} tells us that are at least $\binom{n}{k-1}f_{k-1}$ labelled union-closed families that are $k-$dense(since there are $\binom{n}{k-1}$ possible choice for the universe of $\F$). In particular, the number of $(n-1)-$dense union-closed families is at least $\binom{n}{n-2}f_{n-2}$.
\end{remark}
\begin{remark}
Suppose we are given integers $c, k$ and $n$ with $c<k\leqslant n+2$. Then consider the family $\F=\{[k-c], [k-c+1],\dots, [k-1], [n]\}$. It is easy to see that $s(\F)=c$ and it follows from Corollary~\ref{mcor} that $\F$ is $k-$dense. Thus, for all possible feasible combinations of $c, k$ and $n$, there exists a family $\F$ over universe $[n]$ having density $k$ and $s(\F)=c$.
\end{remark}
\section{Another example of an $(n-1)-$dense family}\label{sec:example}

Let $\mathcal F$ be the union-closed family generated by the collection of subsets $\mathcal B=\{\{1, 2\}, \{2, 3\}, \cdots, \{n-1, n\}\}$, i.e.\ 
$$\mathcal F=\left\{\bigcup_{A\in\mathcal C}A: \mathcal{C} \subseteq \mathcal{B}, \mathcal{C} \text{ non-empty}\right\}.$$
Another way of describing $\F$ is as follows: 
$$\F=\{A\subseteq[n]: A\neq\emptyset \text{ and } i\in A \text{ implies that either } i-1 \in A \text{ or }i+1\in A\}.$$
Assume $n\geqslant 6$. This section is devoted to providing a description of the closures of $\mathcal{F}$ and consequently establishing that it is $(n-1)-$dense. We begin our discourse with the following lemma.

\begin{lemma}\label{sec:4-lem_1}
For any $A\notin\F$ and $|A|\leqslant n-k-1$ for some $1\leqslant k\leqslant n-1$, at least one of the following is true:
\begin{enumerate}[label=\textup{(\roman*)}, noitemsep]
    \item \label{i} $\{1\} \subseteq A\subseteq\{1, 3,\cdots , n\}$;
    \item \label{ii} $\{n\} \subseteq A\subseteq\{1, 2, 3,\cdots , n-2, n\}$; 
    \item \label{iii} $\{1, 2\} \subseteq A\subseteq\{1, 2,\cdots , n-1\}$ and $|A|=n-k-1$; 
    \item \label{iv} $\{n-1, n\} \subseteq A\subseteq\{2, 3,\cdots , n\}$ and $|A|=n-k-1$; 
    \item \label{v} $A\subseteq\{2, 3,\cdots , n-1\}$ and $|A|=n-k-1$; 
    \item \label{vi} $\{1, 2, n-1, n\} \subseteq A$ and $|A|=n-k-1$;
    \item \label{vii} $A\subseteq\{1, 2, \dots , n-2\}$, $A\cap\{1, 2\}\neq\emptyset$, $|A|=n-k-2$ and $A\setminus\{\max A\}\notin\F$, where $\max A$ indicates the largest element contained in $A$;
    \item \label{viii} $A\subseteq\{3, 4, \dots , n\}$, $A\cap\{n-1, n\}\neq\emptyset$, $|A|=n-k-2$ and $A\setminus\{\min A\}\notin\F$, where $\min A$ indicates the smallest element contained in $A$; 
    \item \label{ix} $|A|=n-k-2$, $A\cap\{1, 2\}\neq \emptyset$ and $A\cap\{n-1, n\}\neq\emptyset$; 
    \item \label{x} $A\subseteq\{1, 2,\dots , n-2\}$, $|A|=n-k-2$ and $A\setminus\{\max A\}\in\F$;  
    \item \label{xi} $A\subseteq\{3, 4,\dots , n\}$, $|A|=n-k-2$ and $A\setminus\{\min A\}\in\F$;
    \item \label{xii} $A\subseteq\{3, 4, \dots, n-2\}$, $|A|=n-k-2$ and $\exists j\in A$ such that $\min A< j < \max A$ and $j-1, j+1 \notin A$;
    \item \label{xiii} $A\subseteq\{3, 4, \dots, n-2\}$, $|A|=n-k-2$ and $\forall j\in A$ such that $\min A< j < \max A$ we have $j-1 \in A$ or $j+1 \in A$;
    \item \label{xiv} $|A|\leqslant n-k-3$ 
\end{enumerate}
\end{lemma}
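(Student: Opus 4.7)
The plan is an exhaustive case analysis on $A$ driven by its size together with its intersections with the boundary pairs $\{1,2\}$ and $\{n-1,n\}$. The structural fact underlying every case is that $A\notin\mathcal F$ forces the existence of some ``isolated'' element $i\in A$ with $i-1\notin A$ and $i+1\notin A$ (where we treat $0$ and $n+1$ as lying outside $[n]$); the fourteen listed conditions simply enumerate where such an $i$ can live and what the resulting constraints on $A$ look like.

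First, if $|A|\leqslant n-k-3$ then condition \ref{xiv} immediately applies, so henceforth we may assume $|A|\in\{n-k-2,\,n-k-1\}$. Next, I would dispose of conditions \ref{i} and \ref{ii}, which carry no size constraint: if $1\in A$ and $2\notin A$ we are in \ref{i}, and if $n\in A$ and $n-1\notin A$ we are in \ref{ii}. So from this point on we may additionally assume the complementary implications $1\in A\Rightarrow 2\in A$ and $n\in A\Rightarrow n-1\in A$.

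For $|A|=n-k-1$, the proof then splits into the four subcases indexed by whether $1\in A$ and whether $n\in A$. Under the standing assumptions above, each subcase matches exactly one of \ref{iii}, \ref{iv}, \ref{v}, \ref{vi}: both absent gives $A\subseteq\{2,\dots,n-1\}$, i.e.\ \ref{v}; only $n$ present forces $\{n-1,n\}\subseteq A\subseteq\{2,\dots,n\}$, i.e.\ \ref{iv}; only $1$ present yields \ref{iii} symmetrically; and both present yields $\{1,2,n-1,n\}\subseteq A$, i.e.\ \ref{vi}. For $|A|=n-k-2$, split instead on the pair $(A\cap\{1,2\},\,A\cap\{n-1,n\})$. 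If both are non-empty, \ref{ix} holds. If exactly one is non-empty — say $A\cap\{1,2\}\ne\emptyset$ and $A\cap\{n-1,n\}=\emptyset$ — then $A\subseteq[n-2]$, and a further binary split on whether $A\setminus\{\max A\}\in\mathcal F$ delivers \ref{x} or \ref{vii}; the mirrored argument gives \ref{xi} or \ref{viii}. Finally, if both intersections are empty then $A\subseteq\{3,\dots,n-2\}$, and conditions \ref{xii} and \ref{xiii} exhaust the dichotomy based on the presence or absence of an interior element $j$ with $\min A<j<\max A$ and $j\pm 1\notin A$.

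The main obstacle is not any single inferential step but the bookkeeping needed to confirm that these fourteen conditions truly exhaust every possibility — in particular, that the binary partitions in the $(n{-}k{-}2)$-sized subcases (\ref{vii}/\ref{x} and \ref{viii}/\ref{xi}) are genuine and that the mirrored conditions on the right endpoint are handled symmetrically without any overlooked degenerate $A$. A minor separate item to check is the extreme case $A=\emptyset$, which slips through \ref{xiv} only when $k\leqslant n-3$; one should verify that the intended application of the lemma either avoids $A=\emptyset$ or treats it by convention. Note that the hypothesis $A\notin\mathcal F$ itself is not invoked inside the case split — it only guarantees that the conclusion is non-vacuous — so the proof is essentially combinatorial bookkeeping around the two boundary pairs.
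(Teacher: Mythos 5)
Your proposal is correct and follows essentially the same route as the paper: an exhaustive case split on $|A|\in\{n-k-1,\,n-k-2,\,\leqslant n-k-3\}$ refined by the intersections of $A$ with the boundary pairs $\{1,2\}$ and $\{n-1,n\}$, with the complementary dichotomies \ref{vii}/\ref{x}, \ref{viii}/\ref{xi} and \ref{xii}/\ref{xiii} closing out the middle size. Your choice to dispose of \ref{i} and \ref{ii} up front and then carry the implications $1\in A\Rightarrow 2\in A$ and $n\in A\Rightarrow n-1\in A$ is only a reorganization of the paper's argument (and a slightly tidier one), not a different method.
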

\begin{proof}
We split the proof into the following cases: 
\begin{enumerate}
    \item First, we consider $|A|=n-k-1$. If $1\in A$ and $2\notin A$, then \ref{i} holds. If $1\in A$, $2\in A$ and $n-1\notin A$, then \ref{ii} holds. On the other hand, if $1, 2, n-1\in A$, then \ref{iii} holds when $n\notin A$ and \ref{iv} holds when $n\in A$. If $1, n\notin A$, then \ref{v} holds. If $1\notin A$ and $n\in A$, then \ref{ii} holds when $n-1\notin A$ and \ref{iv} holds when $n-1\in A$.
    \item Next, we consider $|A|=n-k-2$. If $A\cap\{n-1, n\}=\emptyset$ and $A\cap\{1, 2\}=\emptyset$ then $A$ satisfies either \ref{xii} or \ref{xiii} . On the other hand, if $A\cap\{n-1, n\}=\emptyset$ and $A\cap\{1, 2\}\neq\emptyset$ then \ref{vii} holds when $A\setminus\{\max A\}\notin\F$ and \ref{x} holds when $A\setminus\{\max A\}\in\F$. Next, if $A\cap\{n-1, n\}\neq\emptyset$ and $A\cap\{1, 2\}=\emptyset$ then \ref{viii} holds when $A\setminus\{\min A\}\notin\F$ and \ref{xi} holds when $A\setminus\{\min A\}\in\F$. And if $A\cap\{n-1, n\}\neq\emptyset$ and $A\cap\{1, 2\}\neq\emptyset$, then \ref{ix} holds.
    \item Finally, when $|A|\leqslant n-k-3$, \ref{xiv} holds. \qedhere
\end{enumerate}
\end{proof}
Equipped with Lemma~\ref{sec:4-lem_1}, we now state Theorem~\ref{sec:4_theorem_1} which provides a description of the closures of $\F$.
\begin{theorem}\label{sec:4_theorem_1}
Given any positive integer $k\leqslant n-5$ and any subset $A$ of $[n]$ with $A\notin\F$ and $|A|\leqslant n-k-1$, if $A$ satisfies any of the conditions stated in \ref{i}, \ref{ii}, \ref{vii}, \ref{viii}, \ref{ix}, \ref{xii}, \ref{xiv} of Lemma~\ref{sec:4-lem_1}, then $A \notin \overline{\F}^{(k)}$. On the other hand, if $A$ satisfies any of the conditions \ref{iii}, \ref{iv}, \ref{v}, \ref{vi}, \ref{x}, \ref{xi}, \ref{xiii} then $A\in\overline{\F}^{(k)}$.   

\end{theorem}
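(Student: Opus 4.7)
The plan is a single induction on $k$ from $1$ to $n-5$. For each $k$, I work through the fourteen categories of Lemma~\ref{sec:4-lem_1}, using the definition $A \in \overline{\F}^{(k)}$ iff $A \cup B \in \overline{\F}^{(k-1)} \cup \{A\}$ for every $B \in \overline{\F}^{(k-1)}$ (with the convention $\overline{\F}^{(0)} = \F$). The key simplifying input is Corollary~\ref{sec:2:cor_1}: at every level $k-1$, every subset of $[n]$ of size at least $n-k+1$ is already in $\overline{\F}^{(k-1)}$. Hence whenever $|A \cup B| \geqslant n-k+1$ the union is automatically in $\overline{\F}^{(k-1)}$, so I only need to track those $B$'s producing unions of size at most $n-k$; for these, the inductive classification describes exactly the possibilities.

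For the base case $k=1$, I would verify each of the fourteen conditions against the explicit description $\F = \{A \ne \emptyset : \text{every } i \in A \text{ has } i-1 \in A \text{ or } i+1 \in A\}$. For each ``IN'' category this requires showing $A \cup B \in \F$ for every $B \in \F$ with $B \not\subseteq A$; for each ``OUT'' category it requires exhibiting a witness pair $\{i, i+1\} \in \F$ whose union with $A$ contains a newly isolated element. Several of these base verifications reduce to the same local calculation, so they can be grouped.

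For the inductive step at $k \geqslant 2$, the seven ``IN'' cases \ref{iii}, \ref{iv}, \ref{v}, \ref{vi}, \ref{x}, \ref{xi}, \ref{xiii} and the seven ``OUT'' cases \ref{i}, \ref{ii}, \ref{vii}, \ref{viii}, \ref{ix}, \ref{xii}, \ref{xiv} are handled in opposite directions. For an ``IN'' case I pick an arbitrary $B \in \overline{\F}^{(k-1)}$ with $|A \cup B| \leqslant n-k$, classify it using the inductive hypothesis (so $B$ must lie in $\F$ or in one of the ``IN'' categories of Lemma~\ref{sec:4-lem_1} at level $k-1$), and verify that $A \cup B$ again lands in one of those ``IN'' categories or equals $A$. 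For an ``OUT'' case I construct an explicit witness $B$ — typically a set of size $n-k+1$ or $n-k$ sitting in one of the ``IN'' categories at level $k-1$ — chosen so that $A \cup B$ falls into an ``OUT'' category at level $k-1$. For instance, to handle \ref{i} I add a single element to $A$ avoiding $2$ so that $A \cup B$ still contains $1$ but not $2$, keeping it in category \ref{i} at level $k-1$; to handle \ref{xiv} I take any $B$ of size exactly $n-k$ in a forbidden pattern.

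The main obstacle is the combinatorial bookkeeping: each ``IN'' category can interact with up to seven types of $B$, and one must verify that the union always lands back inside the inductive ``IN'' family — this is especially delicate for the ``extendable'' conditions \ref{x} and \ref{xi}, defined via $A \setminus \{\max A\} \in \F$ and $A \setminus \{\min A\} \in \F$, whose behaviour under union depends on the adjacency structure of the endpoints of $A$. The hypothesis $k \leqslant n-5$ is essential because several of the conditions (notably \ref{xii} and \ref{xiii}, which require $A \subseteq \{3, \ldots, n-2\}$ with $|A| = n-k-2$) become vacuous once $k > n-5$, and because only in this range are the witness sets used in the ``OUT'' arguments guaranteed to exist inside the required category.
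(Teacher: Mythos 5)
Your plan coincides with the paper's own proof: a simultaneous induction on $k$ over all fourteen statements of Lemma~\ref{sec:4-lem_1}, using Corollary~\ref{sec:2:cor_1} to reduce each ``IN'' case to unions $A\cup B$ of size at most $n-k$ (which are then classified via the inductive hypothesis) and handling each ``OUT'' case by exhibiting a witness $B$ whose union with $A$ lands in an inductively-known ``OUT'' category. What remains is precisely the case-by-case bookkeeping you identify as the main obstacle, which the paper carries out in full.
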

\begin{proof} Define $N=\{ \ref{i}, \ref{ii}, \ref{vii}, \ref{viii}, \ref{ix}, \ref{xii}, \ref{xiv}\}$ and $M=\{\ref{iii}, \ref{iv}, \ref{v}, \ref{vi}, \ref{x}, \ref{xi}, \ref{xiii}\}$. For $q\in N$, define $P(l, q)$ be the statement that: if $A$ satisfies statement $q$ of Lemma~\ref{sec:4-lem_1} for $k=l$ then $A\notin\overline{\F}^{(l)}$. And for $q\in M$, define $P(l, q)$ be the statement that: if $A$ satisfies statement $q$ of Lemma~\ref{sec:4-lem_1} for $k=l$ then $A\in\overline{\F}^{(l)}$. Let 
$$P(k)=\bigwedge_{q\in N\cup M}P(k, q).$$
To prove the theorem, we have to prove $P(k)$ for $1\leq k\leq n-5$. The proof is via induction on $k$. Let us first establish the base case, i.e.\ $P(1)$. 
\begin{enumerate}
    \item 
When $A$ satisfies \ref{i} of Lemma~\ref{sec:4-lem_1}, we have $A\cap \{3, 4, \dots, n\}=\{3, \dots, n\} \setminus \{j\}$. If $j=3$ then $A\cup\{3, 4\}\notin\F$ and if $j>3$ then $A\cup\{j-1, j\}\notin\F$, and either scenario leads to $A\notin\overline{\F}$.
    \item Follows via an argument similar to the argument for $P(1, \ref{i})$, via symmetry.
    \item 
When $A$ satisfies \ref{iii} of Lemma~\ref{sec:4-lem_1}, there exists an $i$ such that $A=[n-1] \setminus \{i\}$. If $i<n-1$ then the only supersets of $A$ are $[n-1]$, $[n]$ and $A\cup\{n\}$, which are all in $\F$. If $i=n-1$ then $A=[n-2]$. Therefore, it suffices to consider $A\cup B$ for the member-subsets $B=\{n-2, n-1\}$ and $B=\{n-1, n\}$ of $\F$. In the former situation, we obtain $A\cup B=[n-1]$ and in the latter, $A\cup B=[n]$, thus proving the statement.
    \item Follows via an argument similar to the argument for $P(1, \ref{iii})$, via symmetry.
    \item 
When $A$ satisfies \ref{v} of Lemma~\ref{sec:4-lem_1}, we have $A=\{2, 3,\cdots, n-1\}\in\F\subseteq\overline{\F}$.
    \item 
When $A$ satisfies \ref{vi} of Lemma~\ref{sec:4-lem_1}, it is straightforward to check that all supersets of $A$ are already in $\mathcal F$, thus yielding $A\in\mathcal F^{(1)}$.
    \item 
    When $A$ satisfies \ref{vii} of Lemma~\ref{sec:4-lem_1} then the only possibility for $A$ is the set $A=\{1, 3, \dots , n-2\}$. And $A\cup\{n-1, n\}\notin\F\Rightarrow A\notin\overline{\F}$.
    \item Follows via an argument similar to the argument for $P(1, \ref{vii})$, via symmetry.
    \item 
    Suppose $A$ satisfies \ref{ix} of Lemma~\ref{sec:4-lem_1}. Since, $A\notin\F$, there is an $j\in A$ such that $j-1, j+1\notin A$. If there is an $i\leqslant j-3$ such that $i\in A$, $i+1\notin A$ then $A\cup\{i, i+1\}\notin\F$ therefore, $A\notin\overline{\F}^{(1)}$. Also, if $1\notin A$ and $1\leqslant j-3$, then the same argument works by considering $A\cup\{1,2\}$. Otherwise, suppose that for each $i\leqslant j-2$, $i\in A$. Since, $|A|=n-3$ and $A\cap\{n-1,n\}\neq\emptyset$, we must have $j\leqslant n-3$ and hence there is an $i\geqslant j+3$ such that $i\in A$ and ($i-1\notin A$ or $i+1\notin A$) which means that $A\cup\{i-1,i\}\notin\F$ or $A\cup\{i, i+1\}\notin\F$ respectively. In either case, $A\notin\overline{\F}$.
    \item 
    When $A$ satisfies \ref{x} of Lemma~\ref{sec:4-lem_1} then, the only such $A$ is $A=\{1, 2, \dots n-4, n-2\}$. Note that $A\cup B$ for $B\in\F$ can either be $A, [n], [n-1], [n-2], \{1, 2, \dots n-4, n-2, n-1\}$ or $\{1, 2, \dots n-4, n-2, n-1, n\}$. All of these possible sets are in $\F$ and as a result, we have that $A\in\overline{\F}$ ($A\cup B$ cannot be $\{1, 2, \dots n-4, n-2, n\}$ since $n\in A\cup B$ means $n\in B\in\F$ which means that $n-1\in B\subseteq A\cup B$).
    \item Follows via an argument similar to the argument for $P(1, \ref{x})$, via symmetry.
    \item For $k=1$, there is no $A$ satisfying \ref{xii}, hence there is nothing to check in this case.
    \item For $k=1$, there is no $A$ satisfying \ref{xiii}, hence there is nothing to check in this case. 
    \item We have $A\notin\F$ and $|A|\leqslant n-4$.
    Suppose $|A|=1$. If $A=\{i\}$ and $i\geq 4$ then consider $A\cup\{1, 2\}\notin\F$ and hence $A\notin\overline{\F}$. Otherwise, if $i\leqslant 3$, then considering $A\cup\{5, 6\}$ gives the required result, i.e.\ $A\notin\overline{\F}$. So, we can assume that $|A|\geq 2$. Now, since $A\notin\F$, we have a $j\in A$ such that $j-1, j+1\notin A$. Since $|A|\geq 2$, we may assume, without loss of generality, that there is an $i\in A$ such that $i<j$. Now, if there is an $a\leqslant j-3$ such that $a\in A$ but $a+1\notin A$ or $a-1\notin A$, then we have $A\cup\{a, a+1\}\notin\F$ or $A\cup\{a-1, a\}\notin\F$ respectively, which gives $A\notin\overline{\F}$. So, we can assume that $a\leqslant j-2\Rightarrow a\in A$. Now, if $j=\max A$ then because of the last assumption and the size of $A$, $j\leqslant n-3$. Therefore, $A\cup\{n-1, n\}\notin\F$. On the other hand if there is a $b>j$, then since $|A|=n-4$, there is a $c>j+2$ such that either $c-1\notin A$ or $c+1\notin A$, which in turn implies that either $A\cup\{c-1, c\}$ or $A\cup\{c, c+1\}$ is absent in $\F$, and hence $A\notin \overline{\F}$, as required.
\end{enumerate}
Now, suppose that $P(t)$ is true for some $t\leqslant n-6$. Let us consider the $(t+1)$-st case. By Corollary~\ref{sec:2:cor_1} we have that $\mathcal A_{n-t,n}\subseteq\overline{\F}^{(t)}$. 
\begin{enumerate}
    \item 
    Suppose $A$ satisfies \ref{i} of Lemma~\ref{sec:4-lem_1}. First, consider $A=\{1\}$. We have $\{3,4\}\in\F\subseteq\overline{\F}^{(t)}$, thus yielding $A\cup\{3, 4\}=\{1, 3, 4\}$. Moreover, we have $\{1, 3, 4\}\subseteq\{1, 3, \cdots, n\}$ and $1\in\{1, 3, 4\}$. Since $t\leqslant n-4,$ we get $|\{1, 3, 4\}|=3\leqslant n-t-1$. Therefore, by $P(t,\ref{i})$, we conclude that $A\cup\{3, 4\}\notin\overline{\F}^{(t)}$, which gives $A=\{1\}\notin\overline{\F}^{(t+1)}$, as required. Now, let us consider the case where $A\neq\{1\}$. Then, there must exist an $i\in\{3, 4,\cdots , n\}$ such that $i\in A$. Since $|A|\leqslant n-t-2\leqslant n-3$, there must exist a $j\in\{3, 4,\cdots, n\}$ such that $j\notin A$. Therefore, there exist consecutive $i, j\in\{3, 4, \cdots, n\}$ such that $i\in A$ and $j\notin A$. Thus, by $P(t,\ref{i})$, we have $\{i, j\}\cup A\notin\overline{\F}^{(t)}$ and hence $A\notin\overline{\F}^{(t+1)}$, as required.  
    \item Follows via an argument similar to the argument for $P(t+1, \ref{i})$, via symmetry.
    \item 
    Suppose $A$ satisfies \ref{iii} of Lemma~\ref{sec:4-lem_1}. We need to show that for each $B\in\overline{\F}^{(t)}$, we have $A\cup B\in\overline{\F}^{(t)}\cup\{A\}$. Since $|A|=n-t-2$ and $\mathcal A_{n-t,n}\subseteq\mathcal A$, we need only check the case where $|A\cup B|=n-t-1$. We must have $|A\cup B\setminus A|=|B\setminus A|=1$. Let $B\setminus A=\{i\}$. If $i\neq n$ then by $P(t,\ref{iii})$ of the induction hypothesis, we have $A\cup B\in\overline{\F}^{(t)}$. This yields $i=n$. Suppose $n-1\notin A$. Since $B\setminus A=\{n\}$, this means that $n-1\notin B$. Moreover, $|B|\leqslant |A\cup B|=n-t-1$. Therefore, by $P(t,\ref{ii})$, we conclude that $B\notin\overline{\F}^{(t)}$, which leads to a contradiction since we started with $B\in\overline{\F}^{(t)}$. Therefore, we must have, $\{1, 2, n-1, n\}\subset A\cup B$, and $|A\cup B|=n-t-1$, which, by $P(t,\ref{vi})$, gives us $A\cup B\in\overline{\F}^{(t)}$, as required. Therefore, $A\in\overline{\F}^{(t+1)}$.
    \item Follows via an argument similar to the argument for $P(t+1, \ref{iii})$, via symmetry.
    \item 
    Suppose $A$ satisfies \ref{v} of Lemma~\ref{sec:4-lem_1}. We need to check that for each $B\in\overline{\F}^{(t)}$, we have $A\cup B\in\overline{\F}^{(t)}\cup\{A\}$. As noted above in the argument for $P(t+1,\ref{iii})$, it is enough to check this for the case when $|A\cup B|=n-t-1$. We have $B\setminus A=\{i\}$ for some $i$. Suppose $i\neq 1, n$. Then $A\cup B\subseteq \{2, 3, \cdots, n-1\}$ and $|A\cup B|=n-t-1$ together yield, by $P(t,5)$, that $A\cup B\in\overline{\F}^{(t)}$. Without loss of generality, suppose $i=1$. If $2\notin A$, then since $B\setminus A=\{1\}$, we have $2\notin B$. But, in that case, we have $1 \in B\subseteq \{1, 3, 4, \cdots, n\}$ and $|B|\leqslant n-t-1$, which together imply, by $P(t,\ref{i})$, that $B\notin\overline{\F}^{(t+1)}$, which is a contradiction to our hypothesis. Therefore, we must have $2\in A$. Therefore, $\{1, 2\} \subset A\cup B\subseteq\{1, 2, \cdots , n-1\}$ and $|A\cup B|=n-t-1$, so that by $P(t,\ref{iii})$, we have $A\cup B\in\overline{\F}^{(t)}$, which in turn gives us $A\in\overline{\F}^{(t+1)}$. 
    \item 
    Suppose $A$ satisfies \ref{vi} of Lemma~\ref{sec:4-lem_1}. We need to check that for each $B\in\overline{\F}^{(t)}$, we have $A\cup B\in\overline{\F}^{(t)}\cup\{A\}$. As noted above in the argument for $P(t+1,\ref{iii})$, it is enough to check this for the case when $|A\cup B|=n-t-1$. In this case, $\{1, 2, n-1, n\}\subseteq A\cup B$ and $|A\cup B|=n-t-1$ together imply, by $P(t,\ref{vi})$, that $A\cup B\in\overline{\F}^{(t)}$. Therefore, $A\in\overline{\F}^{(t+1)}$, as required.
    \item 
    Suppose $A$ satisfies \ref{vii} of Lemma~\ref{sec:4-lem_1}. Since $A-\{\max A\}\notin\F$, there exists a $j<\max A$ such that $j-1, j+1\notin A$. Therefore, $A\cup\{\max A, \max A+1\}\notin\F$. Moreover, $A\neq A\cup\{\max A, \max A+1\}$. If $\max A=n-2$, then $A\cup\{\max A, \max A+1\}\notin\overline{\F}^{(t)}$, by $P(t,\ref{ix})$. Otherwise, if $\max A<n-2$, then $A\cup\{\max A, \max A+1\}\notin\overline{\F}^{(t)}$, by $P(t,\ref{vii})$. Either way, we have $A\notin \overline{\overline{\F}^{(t)}}=\overline{\F}^{(t+1)}$.
    \item Follows via an argument similar to the argument for $P(t+1, \ref{vii})$, via symmetry.
    \item 
    Suppose $A$ satisfies \ref{ix} of Lemma~\ref{sec:4-lem_1}. Since $A\notin\F$, there exists a $j\in A$ such that $j-1, j+1\notin A$. If there is an $i\leqslant j-3$ such that $i\in A$ and $i+1\notin A$, then by $P(t,\ref{ix})$, we have $A\cup\{i, i+1\}\notin\overline{\F}^{(t)}\Rightarrow A\notin\overline{\F}^{(t+1)}$. Moreover, if $1\notin A$ and $1\leqslant j-3$, then by considering $A\cup\{1,2\}$, the same argument works. Otherwise, suppose $i\in A$ for each $i\leqslant j-2$. Since $|A|=n-t-3\leqslant n-4$ and $A\cap\{n,n+1\}\neq\emptyset$, we must have $j\leqslant n-3$ and hence there is an $i\geq j+3$ such that $i\in A$ and ($i-1\notin A$ or $i+1\notin A$) which means that $A\cup\{i-1,i\}\notin\F$ or $A\cup\{i, i+1\}\notin\F$ respectively. In either case, $A\notin\overline{\F}^{(t)}$ by $P(t,\ref{ix})$.
    \item Suppose $A$ satisfies \ref{x} of Lemma~\ref{sec:4-lem_1}. We need to check that for each $B\in\overline{\F}^{(t)}\cup\{A\}$, we have $A\cup B\in\overline{\F}^{(t)}$. Since, $|A|=n-t-3$, we only need to check the cases $|A\cup B|=n-t-2$ and $|A\cup B|=n-t-1$. Since $B\in\overline{\F}^{(t)}$, we divide the proof into the following parts:
    \begin{enumerate}
        \item First, we consider the case where $B\in\F$. Note that if $\max B>\max A+1$, then $\max B-1\notin A$ and hence we can assume that $|A\cup B|=n-t-1$, which in turn implies that $A\cup B=A\cup\{\max B-1, \max B\}$. If $\max B\leqslant n-1$ and $1\notin A$, then $A\cup B\in\overline{\F}^{(t)}$ by $P(t,\ref{v})$, and if $\max B=n$ and $1\notin A$, then $A\cup B\in\overline{\F}^{(t)}$ by $P(t,\ref{iv})$. On the other hand, if $1\in A$, then $2\in A$ because $A\setminus\{\max A\}\in\F$. So, if $1\in A$ and $\max B\leq n-1$, then $A\cup B\in\overline{\F}^{(t)}$ by $P(t,\ref{iii})$, and if $1\in A$ and $\max B=n$, then $A\cup B\in\overline{\F}^{(t)}$ by $P(t,\ref{vi})$. If $\max B\in\{\max A-1, \max A, \max A+1\}$, then from $A\setminus\{\max A\}\in\F$, it follows that $A\cup B\in\F\subseteq\overline{\F}^{(t)}$. Finally, we consider the case where $\max B\leq \max A-1$. We first assume that $|A\cup B|=n-t-1$. If $1\notin A$ and $1\notin B$, then $A\cup B\in\overline{\F}^{(t)}$ by $P(t,\ref{v})$. Now, recall that if $1\in A$, then $2\in A$ and if $1\in B$ then $2\in B$. In both these cases, $A\cup B\in\overline{\F}^{(t)}$ by $P(t,\ref{iii})$. On the other hand, if $|A\cup B|=n-t-2$, then it is straightforward to see that $A\cup B\setminus\{\max(A\cup B\}=A\cup B\setminus\{\max A\}\in\F$. Therefore, by $P(t, \ref{x})$, we have $A\cup B\in\overline{\F}^{(t)}$.
        \item Next, we consider the case where $B$ satisfies \ref{x} of Lemma~\ref{sec:4-lem_1}. Note that since $|B|=n-t-2$, we can assume that $|A\cup B|=n-t-1$. So, if $1\notin A$ and $1\notin B$, then $A\cup B\in\overline{\F}^{(t)}$ by $P(t,\ref{v})$. And if $1\in A$ or $1\in B$, then $2\in A\cup B$, and therefore, $A\cup B\in\overline{\F}^{(t)}$ by $P(t,\ref{iii})$.
        \item  We consider the case where $B$ satisfies \ref{xi} of Lemma~\ref{sec:4-lem_1}. Since, $|B|=n-t-2$, we can assume that $|A\cup B|=n-t-1$. Suppose, if possible, $A\cup B\notin\overline{\F}^{(t)}$. By Lemma~\ref{sec:4-lem_1} and the induction hypothesis, $A\cup B$ either satisfies \ref{i} or \ref{ii}. If it satisfies \ref{i}, then we must have $1\in A\cup B$ and $2\notin A\cup B$. But $1\in A\cup B$ implies that $1\in A$ (since $B\subseteq\{3, 4, \dots, n\}$), and $1\in A$, in turn, implies that $2\in A\subseteq A\cup B$. A similar argument works for the case where $A\cup B$ satisfies \ref{ii}.
        \item Finally, we consider the case where $B$ satisfies \ref{xiii} of Lemma~\ref{sec:4-lem_1}. We may once again assume that $|A\cup B|=n-t-1$. Note that since $B\subseteq\{3, 4, \dots, n-2\}$, if $1\notin A$ then $A\cup B\in\overline{\F}^{(t)}$ by $P(t, \ref{v})$, and if $1\in A$ then $A\cup B\in\overline{\F}^{(t)}$ by $P(t, \ref{iii})$.
    \end{enumerate}
    \item Follows via an argument similar to the argument for $P(t+1, \ref{x})$, via symmetry.
    \item If $A$ satisfies \ref{xii} of Lemma~\ref{sec:4-lem_1} and $\min A=3$, then $\{\min A-1, \min A\}\cup A\notin \overline{\F}^{(t)}$ by $P(t,\ref{vii})$. On the other hand, if $\min A>3$, then $\{\min A-1, \min A\}\cup A\notin\overline{\F}^{(t)}$ by $P(t, \ref{xii})$.  
    \item Suppose $A$ satisfies \ref{xiii} of Lemma~\ref{sec:4-lem_1}. We need to check that for each $B\in\overline{\F}^{(t)}$, we have $A\cup B\in\overline{\F}^{(t)}\cup\{A\}$. Since, $|A|=n-t-3$, we only need to check the cases where $|A\cup B|=n-t-2$ and $|A\cup B|=n-t-1$. Since $B\in\overline{\F}^{(t)}$, we divide the proof into the following parts:
    \begin{enumerate}
        \item First, we consider the case where $B\in\F$. Let us first suppose that $\min A-1< \min B<\max B<\max A$, which gives $\max A\cup B=\max A$ and $\min A\cup B=\min A$. Suppose, first that $|A\cup B|=n-t-2$. Consider $j\in A\cup B$ such that $\min A< j< \max A$. If $j\in A$ then $j-1\in A\cup B$ or $j+1\in A\cup B$ because of \ref{xiii}. On the other hand, if $j\in B$ then because $B\in\F$, we obtain that $j-1\in B$ or $j+1\in B$. Therefore, $A\cup B$ satisfies $\ref{xiii}$ and hence $A\cup\in\overline{\F}^{(t)}$ by $P(t, \ref{xiii})$. Next, if $|A\cup B|=n-t-1$ then $A\cup B\in\overline{\F}^{(t)}$ by $P(t, \ref{v})$. On the other hand, let us consider the case $\min A-1<\min B<\max B=\max A+1$ and $A\cup B\notin\F$. If $|A\cup B|=n-t-2$ then, it is easy to see that $A\cup B\setminus\{\min (A\cup B)\}=A\cup B\setminus\{\min A\}\in\F$ and hence $A\cup B\in\overline{\F}^{(t)}$ by $P(t,\ref{xi})$. And if $|A\cup B|=n-t-1$ then, $A\cup B\in\overline{\F}^{(t)}$ by $P(t,\ref{v})$. Analogous arguments go through for the case when $\min A-1=\min B<\max B<\max A+1$. Now, if $\min A-1=\min B<\max B=\max A+1$ then since $A$ satisfies \ref{xiii}, it is easy to see that $A\cup B\in\overline{\F}^{(t)}$. Finally, suppose that $\max B>\max A+1$. In this case, $\max B-1, \max B\notin A$ and since $B\in\F$, we do have that $\max B-1\in B$. Also, since we are assuming that $|A\cup B|\leqslant n-t-1$, we have that $A\cup B=A\cup\{\max B-1, \max B\}$. If $\max B\leqslant n-1$ then, $A\cup B\in\overline{\F}^{(t)}$ by $P(t,\ref{v})$ and if $\max B=n$ then $A\cup B\in\overline{\F}^{(t)}$ by $P(t,\ref{iv})$. Analogous argument works for the case when $\min B<\min A-1$. 
        \item Next, we consider the case where $B$ satisfies \ref{x} of Lemma~\ref{sec:4-lem_1}. It means that $|B|=n-t-2$ therefore, we can assume that $|A\cup B|=n-t-1$. If $\min B\geqslant 2$ then $A\cup B\in\overline{\F}^{(t)}$ by $P(t,\ref{v})$ and on the other hand, if $\min B=1$ then, since $B\setminus\{\max B\}\in\F$ and $t\leqslant n-5\Rightarrow |B|=n-t-2\geqslant 3$, we can conclude that $2\in B$. But note that $1, 2\notin A$ which give us that $|A\cup B|\geqslant n-k$. 
        \item The case when $B$ satisfies \ref{xi} of Lemma~\ref{sec:4-lem_1} is analogous to the previous part, via symmetry.
        \item Finally, we consider the case when $B$ satisfies \ref{xiii} of Lemma~\ref{sec:4-lem_1}. Since, in this case, we have that $|B|=n-t-2$, we can again assume that $|A\cup B|=n-t-1$ which, by $P(t,\ref{v})$, gives us that $A\cup B\in\overline{\F}^{(t)}$. 
    \end{enumerate}
    \item Here, we have $A\notin\F$ and $|A|\leqslant n-t-4$. Suppose $|A|=1$. Therefore, if $A=\{i\}$ and $i\geq 4$, then consider $A\cup\{1, 2\}\notin\F$, which gives us $A\cup\{1, 2\}\notin \overline{\F}^{(t)}$ by $P(t,\ref{xiv})$. Otherwise, if $i\leqslant 3$, then considering $A\cup\{5, 6\}$ gives us the required result, i.e.\ $A\notin\overline{\F}^{(t+1)}$. So now, we may assume that $|A|\geq 2$. Since $A\notin\F$, there exists a $j\in A$ such that $j-1, j+1\notin A$. Since $|A|\geq 2$, we can assume, without loss of generality, that there is an $i\in A$ such that $i<j$. Now, if there is an $a\leqslant j-3$ such that $a\in A$ but $a+1\notin A$ or $a-1\notin A$, then $A\cup\{a, a+1\}$ or $A\cup\{a-1, a\}$, respectively, satisfies \ref{xiv}. These are, therefore, not in $\overline{\F}^{(t)}$ by $P(t,\ref{xiv})$, which gives $A\notin\overline{\F}^{(t+1)}$. So now, we may assume that $a\leqslant j-2\implies a\in A$. If $j=\max A$, then because of the last assumption and the size of $A$, we have $j\leqslant n-t-3\leqslant n-4$. If $|A|=n-t-4$, then $|A\cup\{n-1, n\}|=n-t-2$ and hence $A\cup\{n-1, n\}\notin\overline{\F}^{(t)}$ by $P(t,\ref{ix})$. If $|A|\leqslant n-t-5$, then $|A\cup\{n-1, n\}|\leqslant n-t-3$ and hence $A\cup\{n-1, n\}\notin\overline{\F}^{(t)}$ by $P(t,\ref{xiv})$. On the other hand, if there is a $b>j$ with $b\in A$, then since $|A|=n-t-4\leqslant n-5$, there is a $c>j+2$ such that $c-1\notin A$ or $c+1\notin A$, which means that $A\cup\{c-1, c\}$ or $A\cup\{c, c+1\}$, respectively, satisfies \ref{xiv}, and hence $A\notin\overline{\F}^{(t+1)}$, as required.
        \end{enumerate}
This brings us to the end of the proof that $P(k)$ is true for all $k\leqslant n-5$.
\end{proof}

Let us note that Lemma~\ref{sec:4-lem_1}, together with Theorem~\ref{sec:4_theorem_1}, describes all $\overline{\F}^{(k)}$'s for $k\leqslant n-5$. This is because we know that $\F\subseteq\overline{\F}^{(k)}$ and $\mathcal A_{n-k}\subseteq\overline{\F}^{(k)}$. Therefore, if we have an $A\subseteq 2^{[n]}$ such that $A\notin\F$ and $|A|\leqslant n-k-1$, then by Lemma~\ref{sec:4-lem_1}, it satisfies one of the statements \ref{i}, \ref{ii}, \ref{iii}, \ref{iv}, \ref{v}, \ref{vi}, \ref{vii}, \ref{viii}, \ref{ix}, \ref{x}, \ref{xi}, \ref{xii}, \ref{xiii},\ref{xiv}, but then that statement together with the corresponding statement in Theorem~\ref{sec:4_theorem_1} tells us whether $A\in\overline{\F}^{(k)}$ or $A\notin\overline{\F}^{(k)}$. 

We now come to the proof of the fact that $\F$ is $(n-1)$-dense.
\begin{cor}
$\F$ is $(n-1)$-dense.
\end{cor}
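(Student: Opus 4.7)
The upper bound, density of $\F$ at most $n-1$, is immediate from Lemma~\ref{sec:2-lem_3}. For the matching lower bound I aim to exhibit a set absent from $\overline{\F}^{(n-2)}$: the singleton $\{1\}$ will do, and with it in hand we conclude $\overline{\F}^{(n-2)}\neq 2^{[n]}$, giving density at least $n-1$.

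Theorem~\ref{sec:4_theorem_1} already rules out $\{1\}$ from $\overline{\F}^{(k)}$ for every $k\leqslant n-5$, since $\{1\}$ satisfies condition~\ref{i} of Lemma~\ref{sec:4-lem_1} with $|\{1\}|=1\leqslant n-k-1$. The task therefore reduces to extending this exclusion through the three further levels $k=n-4,n-3,n-2$ lying outside the theorem's reach. The uniform strategy is to produce, at each level, a witness $B\in\overline{\F}^{(k)}$ with $B\not\subseteq\{1\}$ and $\{1\}\cup B\notin\overline{\F}^{(k)}$, which forces $\{1\}\notin\overline{\F}^{(k+1)}$.

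For $k=n-5$ the witness $B=\{3,4\}\in\F$ works: Theorem~\ref{sec:4_theorem_1} places $\{1,3,4\}$ in case~\ref{i} with $|A|=3\leqslant 4$, hence $\{1,3,4\}\notin\overline{\F}^{(n-5)}$ and consequently $\{1\}\notin\overline{\F}^{(n-4)}$. For $k=n-4$ I reuse $B=\{3,4\}$ after first showing $\{1,3,4\}\notin\overline{\F}^{(n-4)}$; this follows from the secondary witness $\{4,5\}\in\F$, because $\{1,3,4,5\}$ falls in case~\ref{i} at level $n-5$ with $|A|=4$ and is therefore absent from $\overline{\F}^{(n-5)}$. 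This yields $\{1\}\notin\overline{\F}^{(n-3)}$.

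The genuine obstacle is the final step $k=n-3$, where Corollary~\ref{sec:2:cor_1} has already placed every $3$-subset of $[n]$ into $\overline{\F}^{(n-3)}$; any viable witness $B$ must therefore keep $\{1\}\cup B$ of size at most $2$, forcing $B$ to be a singleton $\{j\}$ with $j\neq 1$. I take $j=3$ and aim to establish two statements: (a) $\{3\}\in\overline{\F}^{(n-3)}$, and (b) $\{1,3\}\notin\overline{\F}^{(n-3)}$. Statement~(b) is the easier one and follows from the previous paragraph: the witness $\{3,4\}\in\overline{\F}^{(n-4)}$ together with $\{1,3,4\}\notin\overline{\F}^{(n-4)}$ forces $\{1,3\}\notin\overline{\F}^{(n-3)}$. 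Statement~(a) is where the real work lies: one has to verify $\{3\}\cup B\in\overline{\F}^{(n-4)}$ for every $B\in\overline{\F}^{(n-4)}$, a case analysis that leans on the complete description of $\overline{\F}^{(n-5)}$ supplied by Theorem~\ref{sec:4_theorem_1} together with the structural observation that the interior element $3$ always acquires a neighbor (namely $2$ or $4$) in any $B$ whose union with $\{3\}$ is not trivially absorbed into~$B$. Once (a) and (b) are in hand, the witness $B=\{3\}$ delivers $\{1\}\notin\overline{\F}^{(n-2)}$, and hence the density of $\F$ is exactly $n-1$.
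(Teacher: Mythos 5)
Your overall architecture is sound and, once unrolled, is essentially the paper's argument in disguise: the paper also isolates the set $\{3\}$, proves $\{3\}\in\overline{\F}^{(n-4)}$ and $\{1,3,4\}\notin\overline{\F}^{(n-4)}$ (via the same witnesses $\{3,4\}$ and $\{4,5\}$), and then, instead of tracking $\{1\}$ through levels $n-3$ and $n-2$ by hand as you do, applies Theorem~\ref{main_1} to the chain $\{3\}\subsetneq\{3,4\}\subsetneq\{1,3,4\}$ inside $\overline{\F}^{(n-4)}$ to conclude that $\overline{\F}^{(n-4)}$ is at least $3$-dense. Your steps for $k=n-5$, $k=n-4$ and your statement (b) are all correct and match the paper's computations.

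The genuine gap is statement (a). You declare it to be ``where the real work lies'' and then only gesture at ``a case analysis that leans on the complete description of $\overline{\F}^{(n-5)}$'' --- but that case analysis, as you have framed it, cannot be carried out with the tools available: verifying $\{3\}\in\overline{\F}^{(n-3)}$ requires checking $\{3\}\cup B$ for every $B\in\overline{\F}^{(n-4)}$, and Theorem~\ref{sec:4_theorem_1} describes $\overline{\F}^{(k)}$ only for $k\leqslant n-5$, so you do not know what the sets $B$ ranging over $\overline{\F}^{(n-4)}$ look like. The correct (and easier) target is $\{3\}\in\overline{\F}^{(n-4)}$, which only requires examining unions with members of the fully described family $\overline{\F}^{(n-5)}$, and which then yields $\{3\}\in\overline{\F}^{(n-3)}$ for free since every union-closed family is contained in its closure. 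That verification is the technical heart of the corollary --- the paper devotes most of its proof to it, running through the cases \ref{x}, \ref{xi}, \ref{iii}, \ref{iv}, \ref{v} of Lemma~\ref{sec:4-lem_1} for the possible $B$ --- and leaving it as a one-sentence sketch means the lower bound has not actually been established.
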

\begin{proof}
First, we establish that $\{3\}\in\overline{\F}^{(n-4)}$. Since $\mathcal A_{5, n}\subseteq\overline{\F}^{(n-5)}$, we only need to investigate $\{3\}\cup B$ for all $B\in\overline{\F}^{(n-5)}$ such that $|\{3\}\cup B|\leqslant 4$. The cases where $|\{3\}\cup B|=1, 2$ are trivial. Let us consider the cases where $|\{3\}\cup B|=3$. If $|B|=3$, then $\{3\}\cup B=B\in\overline{\F}^{(n-5)}$. Therefore, let $|B|=2$. By $P(n-5,\ref{xiv})$, this means that $B\in\F$. Therefore, let $B=\{i, i+1\}$. If $i\leqslant 4$, then $\{3\}\cup \{i, i+1\}\in\F\subseteq\overline{\F}^{(n-5)}$. If $i>4$, then by $P(n-5,\ref{xi})$, we have $\{3\}\cup B\in\overline{\F}^{(n-5)}$. The only case that is left to consider is $|\{3\}\cup B|=4$. Again, we may assume that $|B|=3$ and $3\notin B$. Therefore, $B$ satisfies either \ref{x} or \ref{xi}. If $B$ satisfies \ref{x}, then $B\notin\F$ and $B\setminus\{\max B\}\in\F$. Since $|B|=3$, this means that $B=\{i, i+1, j\}$ such that $i+1<j\leqslant n-2$. If $i=1$, then $\{3\}\cup B\in\overline{\F}^{(n-5)}$ by $P(n-5,\ref{iii})$; if $i\geq 4$ then $\{3\}\cup B\in\overline{\F}^{(n-5)}$ by $P(n-5,\ref{v})$; if $i=2$ or $i = 3$, then $\{3\}\cup B=B$. On the other hand, if $B$ satisfies \ref{xi}, then $B$ is of the form $\{j, i-1, i\}$ where $3\leqslant j<i-1$. If $i=n$, then $\{3\}\cup B\in\overline{\F}^{(k)}$ because of $P(n-5,\ref{iv})$, and if $i<n$ then $\{3\}\cup B\in F^{(k)}$ because of $P(n-5,\ref{v})$. Therefore, $\{3\}\in\overline{\F}^{(n-4)}$. \\
Since $\{1, 3, 4\}\cup\{4, 5\}=\{1, 3, 4, 5\}\notin\F^{(n-5)}$, by $P(n-5,\ref{i})$, we have $\{1, 3, 4\}\notin\overline{\F}^{(n-4)}$.
Now, consider the subsets, $\{3\}\subseteq\{3, 4\}\subseteq\{1, 3, 4\}$. Here, $\{3\}, \{3, 4\}\in\overline{\F}^{(n-4)}$ and $\{1, 3, 4\}\notin\overline{\F}^{(n-4)}$. Therefore, by Theorem~\ref{main_1}, $\overline{\F}^{(n-4)}$ is at least $3-$dense, which means that $\F$ is at least $(n-1)$-dense. This concludes the proof.
\end{proof}

\section{Relative subsets and closure roots}\label{sec:3}
\begin{defn}\label{relative_subset_defn}
Given a union-closed family $\mathcal{F}$ over the universe $[n]$, and $A, B\in\mathcal F$, we define $A$ to be a \emph{subset of $B$ relative to $\mathcal F$}, and write $A\subseteq_{\mathcal F}B$, if at least one of the following happens:
\begin{itemize}
    \item $A=B$.
    \item $B=[n]$
    \item $\exists C\in\mathcal F$ such that $C\neq B$ and $A\cup C=B$.
\end{itemize}
\end{defn}
We write $A\subsetneq_{\mathcal F}B$ if $A\subseteq_{\mathcal F}B$ and $A\neq B$, and $A\nsubseteq_{\mathcal F}B$ when $A$ is not a subset of $B$ relative to $\mathcal{F}$. 

Note that $A\subseteq_{\mathcal F}B$ implies that $A\subseteq B$. It is straightforward to see that this definition coincides with the usual notion of subsets if $\mathcal F=2^{[n]}$. To see that it actually differs from the usual notion of subsets when $\mathcal{F}$ is not $2^{[n]}$, it is enough to consider$\mathcal F=\{\{1\},\{1,2\},\{2,3\},\{1,2,3\}\}$ and observe that $\{1\}\nsubseteq_{\mathcal F}\{1,2\}$. 

\begin{lemma}\label{sec:3-lem_1}
Let $\mathcal F$ be a $1$-dense family. Consider any $A, B, C\in\mathcal F$. Then the following hold:
\begin{enumerate}
\item \label{part_1} $A\subsetneq_{\mathcal F}B$ and $B\subseteq C$ together imply that $A\subsetneq_{\mathcal F}C$.
\item \label{part_2} $A\subsetneq B$ and $B \subseteq C$ together imply that $B\subseteq_{\mathcal F}C$.
\end{enumerate} 
\end{lemma}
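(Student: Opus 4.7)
The plan is to invoke Lemma~\ref{sec:2-lem_4} at the very start: since $\mathcal F$ is $1$-dense and (implicitly) not equal to $2^{[n]}$, it is an up-set. This is the workhorse, because it lets us freely enlarge any member of $\mathcal F$ and stay inside $\mathcal F$. The proofs of both parts then reduce to producing an explicit witness set that belongs to $\mathcal F$, and checking the bookkeeping in Definition~\ref{relative_subset_defn}.

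I would handle Part~\ref{part_2} first, since it is the simpler one and its construction reappears in Part~\ref{part_1}. The trivial cases $B=C$ and $C=[n]$ already give $B\subseteq_{\mathcal F}C$ directly from the definition. Otherwise $B\subsetneq C$ and $C\neq [n]$, and I would take as witness the set
\[
D \;=\; A\cup (C\setminus B).
\]
Because $A\in\mathcal F$ and $D\supseteq A$, the up-set property forces $D\in\mathcal F$. A direct check gives $B\cup D = B\cup A\cup(C\setminus B)=C$. The only thing to verify is $D\neq C$: if $D=C$ then the $B$-portion of $D$, which is exactly $A$, would have to equal $B$, contradicting $A\subsetneq B$. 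Hence $D$ witnesses $B\subseteq_{\mathcal F}C$, and combined with $B\neq C$ we even get $B\subsetneq_{\mathcal F}C$ (stronger than asked).

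For Part~\ref{part_1} the strategy is essentially the same ``extend the witness by $C\setminus B$'' move. First note $A\subsetneq B\subseteq C$ gives $A\neq C$, so I only need to establish $A\subseteq_{\mathcal F}C$. The cases $B=C$ and $C=[n]$ are immediate from the hypothesis and Definition~\ref{relative_subset_defn}. The remaining case $B\subsetneq C$ with $C\neq [n]$ forces (using $B\subseteq C\neq [n]$) that $B\neq [n]$ too, so the hypothesis $A\subsetneq_{\mathcal F}B$ supplies a genuine witness: some $D\in\mathcal F$ with $D\neq B$ and $A\cup D=B$. I would then set
\[
D'\;=\;D\cup(C\setminus B),
\]
observe $D'\in\mathcal F$ by the up-set property (since $D'\supseteq D\in\mathcal F$), and compute $A\cup D'=A\cup D\cup(C\setminus B)=B\cup(C\setminus B)=C$. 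The last check is $D'\neq C$: from $A\cup D=B$ we have $D\subseteq B$, so $D'\cap B=D$; if $D'=C$ then $D=B$, contradicting the choice of $D$. Thus $D'$ witnesses $A\subseteq_{\mathcal F}C$, completing Part~\ref{part_1}.

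The only non-routine point is recognizing that the ``witness'' $D\in\mathcal F$ must automatically satisfy $D\subseteq B$ (because $A\cup D=B$), which is what makes the argument $D'\neq C$ go through; without this observation one might worry that $D$ has pieces outside $B$ that could accidentally fill up $C\setminus B$ and collapse $D'$ onto $C$. Aside from that subtlety, the proof is a straightforward combination of the up-set property with the three-clause definition of $\subseteq_{\mathcal F}$, and I expect no further obstacles.
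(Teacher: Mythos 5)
Your proposal is correct and follows essentially the same route as the paper: both parts reduce to the up-set property (via Lemma~\ref{sec:2-lem_4}) and the same witness construction, namely enlarging $A$ (resp.\ the witness for $A\subsetneq_{\mathcal F}B$) by $C\setminus B$ and checking the result is a proper witness for $C$. Your explicit handling of the $C=[n]$ and $B=[n]$ cases and the observation that the witness lies inside $B$ are slightly more careful than the paper's write-up, but the argument is the same.
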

\begin{proof}
We prove the two parts separately, as follows.
\begin{enumerate}
\item If $C=B$, the conclusion is immediate. Assume, therefore, that $B \subsetneq C$. By definition of the relation $A\subsetneq_{\mathcal F}B$ , there exists $B_1\in\mathcal F$ such that $A\cup B_1=B$ and $B_1\subsetneq B$. Note that on one hand, $(C \setminus B)\cup B_1\subsetneq C$, and on the other, $B_1\subseteq (C \setminus B)\cup B_1$. The latter implies that $(C \setminus B)\cup B_1\in\mathcal F$ (since by Lemma~\ref{sec:2-lem_4}, we know that $\mathcal F$ is an up-set.) Moreover, $A\cup(C \setminus B)\cup B_1=(A\cup B_1)\cup(C \setminus B)=B\cup(C \setminus B)=C$. Together, these observations allow us to conclude that $A\subsetneq_{\mathcal F}C$.
\item If $B=C$, then there is nothing to prove. Otherwise, note that the set $A\in\mathcal F$ and from Lemma~\ref{sec:2-lem_4}, we know that $\mathcal{F}$ is an up-set, and these two together imply that $A\cup(C \setminus B)\in\mathcal F$. Moreover, $A\neq B$ implies that $A\cup(C-B)\neq C$, and $C=B\cup(A\cup (C \setminus B))$. Combining all these, we conclude that $B\subseteq_{\mathcal F}C$. \qedhere
\end{enumerate}
\end{proof}

\begin{cor}\label{sec:3-cor_1}
The relation $\subseteq_{\mathcal{F}}$ of relative subset is transitive when the family $\mathcal{F}$ is $1$-dense.
\end{cor}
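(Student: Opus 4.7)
The plan is to show transitivity by direct appeal to part~\ref{part_1} of Lemma~\ref{sec:3-lem_1}, after disposing of the degenerate cases built into Definition~\ref{relative_subset_defn}. Suppose we are given $A\subseteq_{\mathcal F}B$ and $B\subseteq_{\mathcal F}C$, and we wish to conclude that $A\subseteq_{\mathcal F}C$. First I would peel off the easy cases: if $A=B$, the second hypothesis is exactly the desired conclusion; if $B=C$, the first hypothesis is; and if $C=[n]$, then $A\subseteq_{\mathcal F}C$ is immediate from the second bullet of Definition~\ref{relative_subset_defn}.

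In the remaining situation, we have the strict relations $A\subsetneq_{\mathcal F}B$ and $B\subsetneq_{\mathcal F}C$, and $C\neq[n]$. The key preliminary observation is that the relation $\subseteq_{\mathcal F}$ always implies ordinary set inclusion $\subseteq$, as can be read off by inspecting each of the three bullets of Definition~\ref{relative_subset_defn}. In particular, $B\subseteq C$ in the usual sense. Now part~\ref{part_1} of Lemma~\ref{sec:3-lem_1}, applied to the strict relative inclusion $A\subsetneq_{\mathcal F}B$ together with the ordinary inclusion $B\subseteq C$, yields $A\subsetneq_{\mathcal F}C$, and hence $A\subseteq_{\mathcal F}C$, as required.

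I do not anticipate any real obstacle: the corollary has been set up precisely so that it falls out of the lemma. The only mild care needed is to separate out the definitional special cases ($A=B$, $B=C$, $C=[n]$) before invoking the lemma, because part~\ref{part_1} of Lemma~\ref{sec:3-lem_1} is phrased in terms of the strict relation $\subsetneq_{\mathcal F}$. Also worth remarking that $1$-density of $\mathcal F$ enters only through Lemma~\ref{sec:3-lem_1}, which in turn relies on Lemma~\ref{sec:2-lem_4} to identify $1$-dense families with up-sets; so no additional use of the hypothesis is needed in the corollary itself.
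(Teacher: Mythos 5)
Your proposal is correct and follows exactly the paper's route: the paper also derives transitivity by noting that $B\subseteq_{\mathcal F}C$ implies the ordinary inclusion $B\subseteq C$ and then invoking part~\ref{part_1} of Lemma~\ref{sec:3-lem_1}. Your explicit treatment of the degenerate cases $A=B$, $B=C$, $C=[n]$ is a sensible addition, since the lemma is stated for the strict relation $\subsetneq_{\mathcal F}$, which the paper's one-line justification glosses over.
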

This follows from using the fact that $B\subseteq_{\mathcal F}C$ implies $B\subseteq C$, and using \ref{part_1} of Lemma~\ref{sec:3-lem_1}. However, this is not necessarily true when $\mathcal{F}$ is not $1$-dense. As an instance, if we consider $\mathcal F=\{\{1\}, \{2\}, \{1,2\}, \{1,3\}, \{1,2,3\}, \{1,2,3,4\}\}$, then it can be checked that $\{1\}\subseteq_{\mathcal F}\{1,2\}\subseteq_{\mathcal F}\{1,2,3\}$ but $\{1\}\not\subseteq_{\mathcal F}\{1,2,3\}$. It would be interesting to try to characterize union-closed families $\mathcal{F}$ such that $\subseteq_{\mathcal{F}}$ is transitive.

\begin{prop}\label{sec:3_prop_1}
Let $\mathcal F$ and $\mathcal H\subseteq\mathcal F$ be two union-closed families over the universe $[n]$. Then $\overline{\mathcal H}\supseteq\mathcal F$ if and only if for all $A\in\mathcal H$ and $B\in\mathcal F$ with $A\subseteq_{\mathcal F}B$, we have $B\in\mathcal H$. 
\end{prop}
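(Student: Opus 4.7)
The plan is to unwind both implications directly from the definitions of $\overline{\mathcal H}$ and of the relative subset relation, since the statement really is an ``if and only if'' matching the two ways in which the union of a pair $(A,C)$ with $A\in\mathcal H$ and $C$ external can land back in $\mathcal H$.

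For the forward direction, I would assume $\mathcal F\subseteq\overline{\mathcal H}$ and take $A\in\mathcal H$, $B\in\mathcal F$ with $A\subseteq_{\mathcal F}B$. I would split into the three clauses of Definition~\ref{relative_subset_defn}. The clause $A=B$ gives $B\in\mathcal H$ immediately; the clause $B=[n]$ also gives $B\in\mathcal H$ because $[n]$ belongs to every union-closed family over $[n]$. The substantive clause is when there exists $C\in\mathcal F$ with $C\neq B$ and $A\cup C=B$: here $C\in\mathcal F\subseteq\overline{\mathcal H}$, so $\mathcal H\cup\{C\}$ is union-closed, so $A\cup C=B$ lies in $\mathcal H\cup\{C\}$, and since $B\neq C$ this forces $B\in\mathcal H$.

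For the converse, I would assume the stated condition and take any $B\in\mathcal F$; I need to check $\mathcal H\cup\{B\}$ is union-closed, which, since $\mathcal H$ already is, reduces to showing $A\cup B\in\mathcal H\cup\{B\}$ for each $A\in\mathcal H$. I would first dispose of the degenerate inclusions: if $A\subseteq B$ then $A\cup B=B\in\mathcal H\cup\{B\}$, and if $B\subseteq A$ then $A\cup B=A\in\mathcal H$. Otherwise $A\cup B$ is a proper superset of both $A$ and $B$, and lies in $\mathcal F$ since $A\in\mathcal H\subseteq\mathcal F$ and $\mathcal F$ is union-closed. The key move is then to witness $A\subseteq_{\mathcal F}A\cup B$ using the third clause of Definition~\ref{relative_subset_defn} with $C:=B\in\mathcal F$, which satisfies $C\neq A\cup B$ (precisely because $A\not\subseteq B$) and $A\cup C=A\cup B$. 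The hypothesis applied to the pair $A\in\mathcal H$, $A\cup B\in\mathcal F$ then delivers $A\cup B\in\mathcal H$.

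There is no serious obstacle here; the only care needed is bookkeeping the three clauses of the relative subset definition and, in the converse, isolating the trivial containments $A\subseteq B$ and $B\subseteq A$ so that the witness $C=B$ is genuinely different from $A\cup B$ (which is exactly what the definition of $\subseteq_{\mathcal F}$ demands). No properties of $\mathcal F$ or $\mathcal H$ beyond union-closedness and the containment $\mathcal H\subseteq\mathcal F$ are used, which is consistent with this being the natural generalization of Lemma~\ref{sec:2-lem_4} promised in the introduction.
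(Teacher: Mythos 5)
Your proof is correct and follows essentially the same route as the paper's: the forward direction extracts the witness $C$ from the third clause of the relative-subset definition and uses that $\mathcal H\cup\{C\}$ is union-closed, while the converse takes an arbitrary member of $\mathcal F$, disposes of the trivial containments, and uses that member itself as the witness for $A\subseteq_{\mathcal F}A\cup B$ before applying the hypothesis. No gaps; your explicit remark that $B\neq A\cup B$ because $A\not\subseteq B$ is exactly the bookkeeping the paper's argument implicitly relies on.
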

\begin{proof}
Suppose $\overline{\mathcal H}\supseteq\mathcal F$. Consider any $A\in\mathcal H$ and $B\in\mathcal F$ with $A\subseteq_{\mathcal F}B$. If $A = B$ or $B = [n]$, the conclusion is immediate. Therefore, we consider the third possibility in Definition~\ref{relative_subset_defn}, i.e.\ there exists $C\in\mathcal F$ such that $A\cup C=B$ and $C \neq B$. Now, $C\in\mathcal F \subseteq \overline{\mathcal H}$, which implies that $\mathcal{H} \cup \{C\}$ is union-closed. Since $A\in\mathcal H$, we must have $B = A\cup C\in\mathcal H\cup\{C\}$, and $B \neq C$. Therefore, we must have $B\in\mathcal H$, as claimed.

We now prove the converse. Consider $C\in\mathcal F$. We want to show that $C\in\overline{\mathcal H}$, i.e.\ the family $\mathcal{H} \cup \{C\}$ is union-closed. Thus, for every $A \in \mathcal{H}$, we want to show that the subset $A \cup C$ either equals $C$ or is in $\mathcal{F}$. If $A\cup C=A$ or $A\cup C=C$ then there's nothing further to check. Otherwise, $A\cup C=B$ for some $B$ with $B\neq C$ and $C \in \mathcal{F}$, thus establishing that $A \subseteq_{\mathcal{F}} B$. Moreover, as $A \in \mathcal{H} \subseteq \mathcal{F}$ and $C \in \mathcal{F}$ and $\mathcal{F}$ is union-closed, hence $B \in \mathcal{F}$ as well. Therefore, the hypothesis of the proposition implies that $B \in \mathcal H$, hence completing the proof. 
\end{proof}

It is worth noting how Proposition~\ref{sec:3_prop_1} compares with Lemma~\ref{sec:2-lem_4}. In Proposition~\ref{sec:3_prop_1}, if we take $\mathcal F=2^{[n]}$, then $\mathcal H$ has to be a $1$-dense family and $\forall A\in\mathcal H$ and $B\in 2^{[n]}$ such that $A\subseteq B$, we have $B\in\mathcal H$, which implies that $\mathcal H$ is an up-set, thus corroborating the claim of Lemma~\ref{sec:2-lem_4}. Thus Proposition~\ref{sec:3_prop_1} is a generalisation of Lemma~\ref{sec:2-lem_4} when we consider the notion of relative subsets. Let us recall that given a family $\F$, $A\in\F$ is  an \textit{inclusion-wise minimal} member subset if for all $B\in\F$ with $B\subseteq A$, we have $A=B$.

\begin{defn}\label{generating_set_defn}
Consider the family $\mathcal G$ of all inclusion-wise minimal member subsets of $\mathcal F$. When $\mathcal{F}$ is $1$-dense, using Lemma~\ref{sec:2-lem_4} we see that 
$$\mathcal F=\{B\in 2^{[n]}: \exists A\in\mathcal G \text{ with } B\supseteq A\}.$$
We write $<\mathcal G>=\mathcal F$ and say that $\mathcal G$ is the \textit{generating set} of $\F$.
\end{defn}

Let us recall the notion of basis sets of a union-closed family(see \cite{c}). A set $A\in\F$ is called a $\textit{basis set}$ if for all $X, Y\in\F$ with $A=X\cup Y$, we have either $X=A$ or $Y=A$. Let $\mathcal B$ be the set of all basis sets of $\F$. Then $\mathcal B$ is the minimal set with the property that the union-closed family generated by $\mathcal B$ is $\F$.
We emphasize here that $\mathcal G\neq\mathcal B$ in general. For example, if the $1$-dense family is $\{\{1,\},\{1,2\},\{1,3\},\{1,2,3\}\}$ then $\mathcal G=\{\{1\}\}$ but $\mathcal B=\{\{1\},\{1,2\},\{1,3\}\}$.  

\begin{defn}\label{generating_set_defn_1}
Let $\mathcal{F}$ be a union-closed family over the universe $[n]$, and let $\mathcal K\subseteq\mathcal F$. Define the family
\begin{equation}
<\mathcal K>_{\mathcal F}=\{B\in\mathcal F: \text{ there exists } A\in\mathcal K \text{ with } A\subseteq_{\mathcal F}B\}.
\end{equation}
We say that $\mathcal K$ generates $<\mathcal K>_{\mathcal F}$ relative to $\mathcal F$.
\end{defn}
We emphasise here that $\mathcal K\subseteq<K>_{\F}$. We shall eventually establish that when $\mathcal{G}$, as mentioned above, is the family of all inclusion-wise minimal member subsets of the $1-$dense family $\mathcal F$, the family $<\mathcal G>_{\mathcal F}$ serves as a test candidate for determining whether there exists a union-closed family $\mathcal H$ such that $\overline{\mathcal H}=\mathcal F$. 

\begin{lemma}\label{sec:3-lem_2}
Let $\mathcal F$ be a $1$-dense union-closed family and $\mathcal K\subseteq\mathcal F$. Then, $<\mathcal K>_{\mathcal F}$ is union-closed.
\end{lemma}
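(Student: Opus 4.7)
The plan is to check the defining property of union-closedness directly: fix arbitrary $B_1, B_2 \in \langle \mathcal K \rangle_{\mathcal F}$ with witnesses $A_1, A_2 \in \mathcal K$ such that $A_i \subseteq_{\mathcal F} B_i$, and show that $B_1 \cup B_2 \in \langle \mathcal K \rangle_{\mathcal F}$. The membership $B_1 \cup B_2 \in \mathcal F$ is automatic since $\mathcal F$ itself is union-closed, so the only remaining task is to exhibit some $A \in \mathcal K$ with $A \subseteq_{\mathcal F} B_1 \cup B_2$.

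I would first dispose of the easy case. If $B_1 \subseteq B_2$, then $B_1 \cup B_2 = B_2$, and $A = A_2$ works immediately; symmetrically if $B_2 \subseteq B_1$, take $A = A_1$. Hence we may assume $B_1 \not\subseteq B_2$ and $B_2 \not\subseteq B_1$, so that $B_1, B_2 \subsetneq B_1 \cup B_2$. I would then try $A = A_1$ and argue by the structure of Definition~\ref{relative_subset_defn} applied to $A_1 \subseteq_{\mathcal F} B_1$. The clause $B_1 = [n]$ is excluded because $B_1 \subsetneq B_1 \cup B_2 \subseteq [n]$; so either $A_1 = B_1$, or else $A_1 \subsetneq_{\mathcal F} B_1$ via some $D \in \mathcal F$ with $D \ne B_1$ and $A_1 \cup D = B_1$.

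In the strict subcase $A_1 \subsetneq_{\mathcal F} B_1$, part~\ref{part_1} of Lemma~\ref{sec:3-lem_1}, combined with the inclusion $B_1 \subseteq B_1 \cup B_2$, yields $A_1 \subsetneq_{\mathcal F} B_1 \cup B_2$, giving the required witness. In the remaining subcase $A_1 = B_1$, I would take $C = B_2 \in \mathcal F$; then $A_1 \cup C = B_1 \cup B_2$ and, because $B_1 \not\subseteq B_2$, we have $C = B_2 \ne B_1 \cup B_2$, so Definition~\ref{relative_subset_defn} directly gives $A_1 \subsetneq_{\mathcal F} B_1 \cup B_2$, finishing the argument.

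I do not expect any substantive obstacle: the whole proof is a short case split, with part~\ref{part_1} of Lemma~\ref{sec:3-lem_1} doing the main work in the genuinely non-trivial situation. The only care required is the initial containment dichotomy between $B_1$ and $B_2$, to ensure we switch between the witnesses $A_1$ and $A_2$ appropriately; every subsequent step is a direct reading of Definition~\ref{relative_subset_defn}.
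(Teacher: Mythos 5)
Your proposal is correct and follows essentially the same route as the paper: a case split on whether the witness $A_1$ is a strict relative subset of $B_1$ (handled by part~\ref{part_1} of Lemma~\ref{sec:3-lem_1}) or equals $B_1$ (handled by exhibiting a witness $C$ for the third clause of Definition~\ref{relative_subset_defn}). The only cosmetic difference is that you take $C=B_2$ where the paper takes the other minimal witness $B_o$; both choices work.
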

\begin{proof}
Fix any $A, B\in<\mathcal K>_{\mathcal F}$. Then there exist subsets $A_o$ and $B_o$ in $\mathcal K$ such that $A_o\subseteq_{\mathcal F}A$ and $B_o\subseteq_{\mathcal F}B$. If $A_o=A$ and $B_o=B$ and $A_o\cup B_o$ equals either $A_{o}$ or $B_{o}$, then the conclusion is immediate since $A_{o}, B_{o} \in \mathcal{K} \subset <\mathcal K>_{\mathcal F}$. If $A_{o} = A$ and $B_{o} = B$ but $A_{o} \cup B_{o}$ equals neither $A_{o}$ nor $B_{o}$, then since $B_{o} \in \mathcal{K} \subset \mathcal{F}$ and $B_{o} \neq A_{o} \cup B_{o}$, hence we conclude that $A_{o} \subset_{\mathcal{F}} A_{o} \cup B_{o}$. Finally, assume that either $A_{o} \subsetneq_{\mathcal{F}} A$ or $B_{o} \subsetneq_{\mathcal{F}} B$ or both. In this case, since $\mathcal{F}$ is a $1$-dense union-closed family, $A_{o}$, $A$ and $A \cup B$ all belong to $\mathcal{F}$, and $A_{o} \subsetneq_{\mathcal{F}} A \subset A \cup B$, by \ref{part_1} of Lemma~\ref{sec:3-lem_1}, we conclude that $A_o \subseteq_{\mathcal F}A\cup B\Rightarrow A\cup B\in<\mathcal K>_{\mathcal F}$. This concludes the proof.
\end{proof}

Given union-closed families $\H\subseteq\F$ and $A\in\H$, let us say that $A$ is \textit{$\subseteq_{\F}-$wise minimal member-subset} of $\H$ if whenever $B\in\H$ and $B\subseteq_{\F}A$, we have that $B=A$.

\begin{prop}\label{sec:3_prop_2}
Let $\mathcal F$ be a $1$-dense family and $\mathcal H\subseteq\mathcal F$ be union-closed. Then, the following are equivalent:
\begin{enumerate}
    \item \label{statement_1} $\overline{\mathcal H}\supseteq\mathcal F$.
    \item \label{statement_2} For all $A\in\mathcal H$ and $B\in\mathcal F$ such that $A\subseteq_{\mathcal F}B$, we have $B\in\mathcal H$.
    \item \label{statement_3} The set $\mathcal K\subseteq\mathcal H$ of all $\subseteq_{\F}-$wise minimal member-subsets of $\H$ satisfies $<\mathcal K>_{\mathcal F}=\mathcal H$.
\end{enumerate}
\end{prop}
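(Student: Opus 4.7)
The plan is to establish the cyclic chain of implications $(\ref{statement_1}) \Rightarrow (\ref{statement_2}) \Rightarrow (\ref{statement_3}) \Rightarrow (\ref{statement_2}) \Rightarrow (\ref{statement_1})$, but in practice the equivalence $(\ref{statement_1}) \Leftrightarrow (\ref{statement_2})$ is essentially immediate from Proposition~\ref{sec:3_prop_1}, which applies to arbitrary union-closed $\mathcal H \subseteq \mathcal F$ and makes no use of $1$-density. So the real content is the equivalence $(\ref{statement_2}) \Leftrightarrow (\ref{statement_3})$, and the crucial ingredient there is Corollary~\ref{sec:3-cor_1}, i.e., the transitivity of $\subseteq_{\mathcal F}$ under the assumption that $\mathcal F$ is $1$-dense.

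For $(\ref{statement_3}) \Rightarrow (\ref{statement_2})$, I would take $A \in \mathcal H = <\mathcal K>_{\mathcal F}$ and $B \in \mathcal F$ with $A \subseteq_{\mathcal F} B$. By definition of $<\mathcal K>_{\mathcal F}$ there exists $A_o \in \mathcal K$ with $A_o \subseteq_{\mathcal F} A$. Transitivity of $\subseteq_{\mathcal F}$ (Corollary~\ref{sec:3-cor_1}) yields $A_o \subseteq_{\mathcal F} B$. Since $A_o \in \mathcal K \subseteq \mathcal H$, this places $B$ in $<\mathcal K>_{\mathcal F} = \mathcal H$.

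For $(\ref{statement_2}) \Rightarrow (\ref{statement_3})$, the inclusion $<\mathcal K>_{\mathcal F} \subseteq \mathcal H$ follows directly from statement (\ref{statement_2}): any $B \in <\mathcal K>_{\mathcal F}$ has some $A \in \mathcal K \subseteq \mathcal H$ with $A \subseteq_{\mathcal F} B$, so $B \in \mathcal H$. For the reverse inclusion $\mathcal H \subseteq <\mathcal K>_{\mathcal F}$, I would take any $B \in \mathcal H$ and descend through a chain $B = B_0 \supsetneq_{\mathcal F} B_1 \supsetneq_{\mathcal F} B_2 \supsetneq_{\mathcal F} \cdots$ of members of $\mathcal H$; since each step strictly decreases the usual cardinality (because $B_{i+1} \subsetneq_{\mathcal F} B_i$ implies $B_{i+1} \subsetneq B_i$), finiteness of $[n]$ forces the chain to terminate at some $A \in \mathcal H$ that is $\subseteq_{\mathcal F}$-wise minimal in $\mathcal H$, i.e., $A \in \mathcal K$. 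Iterated application of transitivity (Corollary~\ref{sec:3-cor_1}) then gives $A \subseteq_{\mathcal F} B$, placing $B$ in $<\mathcal K>_{\mathcal F}$.

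The only subtle step is the termination argument in $(\ref{statement_2}) \Rightarrow (\ref{statement_3})$, and this is where I expect most of the care is needed: one must be explicit that the $\subsetneq_{\mathcal F}$ relation, by Definition~\ref{relative_subset_defn}, does imply strict set-inclusion, so that descending chains in $\mathcal H$ are automatically finite. Beyond that, the argument is essentially a routine application of the transitivity of $\subseteq_{\mathcal F}$ guaranteed by $1$-density of $\mathcal F$; without $1$-density the implication $(\ref{statement_3}) \Rightarrow (\ref{statement_2})$ would fail, as the counterexample immediately after Corollary~\ref{sec:3-cor_1} illustrates.
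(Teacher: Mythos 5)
Your proposal is correct and follows essentially the same route as the paper: $(\ref{statement_1})\Leftrightarrow(\ref{statement_2})$ via Proposition~\ref{sec:3_prop_1}, $(\ref{statement_3})\Rightarrow(\ref{statement_2})$ via transitivity (Corollary~\ref{sec:3-cor_1}), and $(\ref{statement_2})\Rightarrow(\ref{statement_3})$ by both inclusions. The only difference is that you spell out the descending-chain/finiteness argument that the paper compresses into the phrase ``by minimality,'' which is a reasonable elaboration rather than a different approach.
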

\begin{proof}
That \ref{statement_1} and \ref{statement_2} are equivalent follows from Proposition~\ref{sec:3_prop_1}.

We now show that \ref{statement_3} implies \ref{statement_2}. Consider any $A\in<\mathcal K>_{\mathcal F}=\mathcal H$. Then there exists $A_o\in\mathcal K$ such that $A_o\subseteq_{\mathcal F}A$. Suppose there exists $B\in\mathcal F$ be such that $A\subseteq_{\mathcal F}B$. Then the relations $A_o\subseteq_{\mathcal F}A$ and $A\subseteq_{\mathcal F}B$ along with Corollary~\ref{sec:3-cor_1} imply that $A_o\subseteq_{\mathcal F}B$, therefore yielding $B\in<\mathcal K>_{\mathcal F}$. 

Finally, we show that \ref{statement_2} implies \ref{statement_3}. Consider any $B\in\mathcal H$. By minimality, there exists $A\in\mathcal K$ such that $A\subseteq_{\mathcal F}B$. This implies that $B\in<\mathcal K>_{\mathcal F}$. Conversely, consider any $B\in<\mathcal K>_{\mathcal F}$. Then there exists $A\in\mathcal K$ such that $A\subseteq_{\mathcal F}B$, which then implies that $B\in\mathcal H$, by the hypothesis in \ref{statement_2}.
\end{proof}

\subsection{Closure Roots}\label{subsec:closure_roots}
\begin{defn}\label{closure_root_defn}
For a union-closed family $\mathcal F$, we say that a union-closed family $\mathcal H$ is a \emph{closure root} of $\mathcal F$ if $\overline{\mathcal H}=\mathcal F$.
\end{defn}
Closure roots need not exist nor do they need be unique: for example, $(n-1)$-dense families do not have closure roots, whereas every $1$-dense family is a closure root of $2^{[n]}$. In the tree $\mathcal G_n$ described in \S\ref{sec:2}, the families not having any closure root determine the leaves of the tree. In \S\ref{subsec:closure_roots}, we investigate the existence of closure roots of $1$-dense families. 
\begin{remark}Note that if $\mathcal H$ is a closure root of $\F$ then by Corollary~\ref{sec:2:cor_1}, we have that $\mathcal A_{n-1}\subseteq\overline{\mathcal H}=\F$. Therefore, any family $\F$ with $\mathcal A_{n-1,n}\nsubseteq\F$ does not have a closure root. In particular, this shows that every family considered in \S\ref{sec:k} does not have a closure root thus giving at least $\binom{n}{k-1}f_{k-1}$ many leaves at level $k$ of the tree $\mathcal G_n$. In Corollary~\ref{sec:3_example_lemma}, we give example of an $1-$dense family with $\mathcal A_{n-1}\subseteq\F$, which does not have a closure root.
\end{remark}
\begin{lemma}
Let $\mathcal F$ be a $1$-dense family, with $\mathcal G$ its generating set (as defined in Definition~\ref{generating_set_defn}). Let $A,B\in<\mathcal G>_{\mathcal F}$ with $A\subseteq B$. Then $A\subseteq_{\mathcal F}B$.
\end{lemma}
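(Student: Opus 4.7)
The plan is to handle the trivial cases straight from the definition of $\subseteq_{\mathcal F}$ and then, in the remaining case, build an explicit witness $C\in\mathcal F$ with $C\neq B$ and $A\cup C=B$. If $A=B$ or $B=[n]$ the conclusion is immediate, so I may assume $A\subsetneq B$ and $B\neq[n]$. Since $A\in\,<\mathcal G>_{\mathcal F}$, pick $A_o\in\mathcal G$ with $A_o\subseteq_{\mathcal F} A$. The proof splits into two cases depending on whether $A_o=A$ or $A_o\subsetneq_{\mathcal F} A$.

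If $A_o\subsetneq_{\mathcal F} A$, fix $A_1\in\mathcal F$ with $A_1\neq A$ and $A_o\cup A_1=A$, and set $C:=A_1\cup(B\setminus A)$. Since $\mathcal F$ is an up-set by Lemma~\ref{sec:2-lem_4} and $C\supseteq A_1\in\mathcal F$, we get $C\in\mathcal F$. A direct computation gives $A\cup C=A\cup A_1\cup(B\setminus A)=A\cup(B\setminus A)=B$. Moreover $C\neq B$: for if $C=B$ then $A\subseteq C$, and since $A$ and $B\setminus A$ are disjoint this forces $A\subseteq A_1$, which together with $A_1\subseteq A$ contradicts $A_1\neq A$.

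If instead $A_o=A$, so that $A\in\mathcal G$, I would use $B\in\,<\mathcal G>_{\mathcal F}$ to produce $B_o\in\mathcal G$ with $B_o\subseteq_{\mathcal F} B$. If $B_o=A$ then $A=B_o\subseteq_{\mathcal F} B$ and we are done. Otherwise $A$ and $B_o$ are two distinct inclusion-wise minimal members of $\mathcal F$, so minimality of $B_o$ (together with $A\in\mathcal F$) rules out $A\subseteq B_o$. Set $C:=B_o\cup(B\setminus A)$, which lies in $\mathcal F$ by the up-set property, satisfies $A\cup C=B_o\cup A\cup(B\setminus A)=B$, and is distinct from $B$ because $A\subseteq C$ would reduce via disjointness of $A$ and $B\setminus A$ to $A\subseteq B_o$, contradicting the minimality we just established.

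The main obstacle is precisely this second case: when $A$ itself is inclusion-wise minimal in $\mathcal F$ there is no way to shrink $A$ from the inside to form the witness, so one must instead exploit the hypothesis $B\in\,<\mathcal G>_{\mathcal F}$ to get a second minimal generator $B_o$ sitting inside $B$, and then rely on the mutual minimality of $A$ and $B_o$ to guarantee $A\not\subseteq B_o$, which is what ultimately forces the candidate $C$ to be strictly smaller than $B$.
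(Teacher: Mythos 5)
Your proof is correct, and it follows a genuinely different decomposition from the paper's. The paper argues uniformly through the generator of $B$: it picks $C\in\mathcal G$ with $C\subsetneq_{\mathcal F}B$ witnessed by some $C_1$ with $C\cup C_1=B$, observes $A\nsubseteq C$ by minimality of $C$, and uses $C\cup(C_1\setminus\{a\})$ for a single element $a\in A\setminus C$ as the witness — so it never needs to look inside $A$ at all. You instead split on whether $A$ is itself inclusion-wise minimal, building the witness from $A$'s own generator padded by $B\setminus A$ in one case, and from $B$'s generator padded by $B\setminus A$ in the other. Both arguments hinge on the same two ingredients — the up-set property of a $1$-dense family (Lemma~\ref{sec:2-lem_4}) to certify the candidate witness lies in $\mathcal F$, and disjointness of $A$ from $B\setminus A$ to certify the witness is a proper subset of $B$ — so neither is deeper than the other; the paper's version is shorter because one case suffices, while yours has the small advantage of handling explicitly the degenerate possibility that the generator of $B$ equals $A$ (which the paper passes over silently when it asserts $A\nsubseteq C$). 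All the steps you use check out: in your first case $A_o\subsetneq_{\mathcal F}A$ together with $A\neq[n]$ really does yield the witness $A_1\subseteq A$ with $A_1\neq A$, and in your second case the minimality of the two distinct generators correctly forces $A\nsubseteq B_o$, which is exactly what makes $C\neq B$.
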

\begin{proof}
If $A=B$ or $B=[n]$, then the conclusion is immediate, therefore we assume that $A \subsetneq B \subsetneq [n]$. Note that as $B\in<\mathcal G>_{\mathcal F}$, there exists $C\in\mathcal G$ such that $C\subsetneq_{\mathcal F}B$, which in turn means that there exists $C_1\in\mathcal F$ such that $C\cup C_1=B$ and $C_1\subsetneq B$. Now, $A\subsetneq B$ implies that $A\subsetneq C\cup C_1$, and $C\in\mathcal G$ implies that $A\nsubseteq C$, which in turn yields $\emptyset\neq A \setminus C\subsetneq C_1 \setminus C$. Let $a\in A \setminus C$. We then have
$$A\cup(C\cup (C_1 \setminus \{a\}))=B$$ 
where $C\subseteq C\cup(C_1 \setminus \{a\})$ and $C \in \mathcal{G}$ together imply that $C\cup(C_1 \setminus \{a\})\in\mathcal F$, and since $a\notin C$, hence $C\cup(C_1-\{a\})\neq B$. These observations together yield $A\subseteq_{\mathcal F}B$, as desired.    
\end{proof}

\begin{theorem}\label{sec:3_theorem_1}
Let $\mathcal F$ be a $1$-dense family with generating set $\mathcal G$. Let $\mathcal H\subseteq\mathcal F$ be a union-closed family such that $\overline{\mathcal H}\supseteq\mathcal F$. Then $\overline{\mathcal H}\supseteq\overline{<\mathcal G>}_{\mathcal F}$.
\end{theorem}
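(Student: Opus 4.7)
The plan is to fix any $A \in \overline{<\mathcal G>_{\mathcal F}}$ and show that $\mathcal H \cup \{A\}$ is union-closed, which will place $A$ in $\overline{\mathcal H}$. Concretely, for each $H \in \mathcal H$ I must verify that $H \cup A \in \mathcal H \cup \{A\}$. The cases $A \subseteq H$ (giving $H \cup A = H \in \mathcal H$) and $H \subseteq A$ (giving $H \cup A = A$) are immediate, so from here on I assume $H \cup A$ strictly contains both $H$ and $A$. Because Lemma~\ref{sec:2-lem_4} tells us that the $1$-dense family $\mathcal F$ is an up-set and $H \in \mathcal H \subseteq \mathcal F$, the superset $H \cup A$ automatically lies in $\mathcal F$.

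From this point my strategy is to apply the forward direction of Proposition~\ref{sec:3_prop_1} to the pair $(H, H\cup A)$: combined with the hypothesis $\overline{\mathcal H}\supseteq\mathcal F$, this delivers $H\cup A\in\mathcal H$ as soon as one can prove $H\subseteq_{\mathcal F}H\cup A$. Thus the entire proof reduces to establishing this relative-subset relation.

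I plan to establish $H \subseteq_{\mathcal F} H\cup A$ via a case split on whether $H$ is itself a minimal member of $\mathcal F$. If $H \notin \mathcal G$, then some generator $G \in \mathcal G$ satisfies $G \subsetneq H$, and applying part~\ref{part_2} of Lemma~\ref{sec:3-lem_1} to the chain $G \subsetneq H \subseteq H\cup A$ (all inside $\mathcal F$) gives $H \subseteq_{\mathcal F} H\cup A$ on the nose. If instead $H \in \mathcal G$, then $H \in <\mathcal G>_{\mathcal F}$ trivially, so the assumption $A\in\overline{<\mathcal G>_{\mathcal F}}$ forces $H\cup A\in <\mathcal G>_{\mathcal F}\cup\{A\}$; since $H\cup A\neq A$ in the non-trivial regime, in fact $H\cup A\in<\mathcal G>_{\mathcal F}$. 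With both $H$ and $H\cup A$ lying inside $<\mathcal G>_{\mathcal F}$ and $H\subseteq H\cup A$, the lemma immediately preceding Theorem~\ref{sec:3_theorem_1} upgrades ordinary inclusion to $H\subseteq_{\mathcal F}H\cup A$.

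The principal subtlety is just the bookkeeping: the relation $\subseteq_{\mathcal F}$ is defined only between elements of $\mathcal F$, and the two upgrade-to-relative-inclusion tools (part~\ref{part_2} of Lemma~\ref{sec:3-lem_1} and the preceding lemma) have slightly different hypotheses, so one must match the correct tool to the correct case. Once those memberships are verified, the argument reduces to the clean case split above, with no additional constructions needed beyond the relative-subset machinery already developed in this section.
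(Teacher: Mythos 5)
Your proof is correct, and it takes a mildly but genuinely different route from the paper's. The paper introduces the family $\mathcal K$ of $\subseteq_{\mathcal F}$-wise minimal member-subsets of $\mathcal H$, invokes Proposition~\ref{sec:3_prop_2} to write $\mathcal H=<\mathcal K>_{\mathcal F}$, and then shows $A\cup B\in<\mathcal K>_{\mathcal F}$ by exhibiting an element of $\mathcal K$ relatively below $A\cup B$; this forces a three-way case split (whether $B$ is $\subseteq_{\mathcal F}$-minimal in $\mathcal H$, and if so whether $B$ properly contains a generator or is itself a generator). You instead go straight through the forward direction of Proposition~\ref{sec:3_prop_1}, reducing everything to the single claim $H\subseteq_{\mathcal F}H\cup A$, and your split on $H\in\mathcal G$ versus $H\notin\mathcal G$ collapses the paper's first two cases into one application of part~\ref{part_2} of Lemma~\ref{sec:3-lem_1} (every non-minimal $H\in\mathcal F$ properly contains some $G\in\mathcal G$, by finiteness). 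The remaining case $H\in\mathcal G$ is handled exactly as in the paper: union-closedness of $<\mathcal G>_{\mathcal F}\cup\{A\}$ puts $H\cup A$ into $<\mathcal G>_{\mathcal F}$, and the unnumbered lemma preceding Theorem~\ref{sec:3_theorem_1} upgrades $H\subseteq H\cup A$ to $H\subseteq_{\mathcal F}H\cup A$; this is the only place the hypothesis $A\in\overline{<\mathcal G>}_{\mathcal F}$ is used, in both arguments. All memberships you need ($H\cup A\in\mathcal F$ via the up-set property, $G, H, H\cup A\in\mathcal F$ for Lemma~\ref{sec:3-lem_1}, and $H, H\cup A\in<\mathcal G>_{\mathcal F}$ for the unnumbered lemma) check out, so the argument is complete; what it buys is that the $\mathcal K$ machinery and Proposition~\ref{sec:3_prop_2} are not needed for this theorem at all.
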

\begin{proof}
Let $\mathcal K$ be the set of all $\subseteq_{\mathcal F}$-wise minimal elements of $\mathcal H$. Since $\overline{\mathcal H}\supseteq\mathcal F$, Proposition~\ref{sec:3_prop_2} yields $<\mathcal K>_{\mathcal F}=\mathcal H$. Let $A\in\overline{<\mathcal G>}_{\mathcal F}$ and $B\in<\mathcal K>_{\mathcal F}$. We wish to show that $A\cup B\in<\mathcal K>_{\mathcal F}\cup\{A\}$. If $A\cup B=B$ or $A\cup B=A$, then the conclusion is immediate. Henceforth, assume $A\nsubseteq B$ and $B\nsubseteq A$. Therefore, $B\subsetneq A\cup B$. There exists $C\in\mathcal K$ such that $C\subseteq_{\mathcal F}B$. If $C\neq B$, then $C\subsetneq_{\mathcal F}B\subseteq A\cup B$. Hence, by Lemma~\ref{sec:3-lem_1}, we get $C\subsetneq_{\mathcal F}A\cup B$, implying that $A\cup B\in<\mathcal K>_{\mathcal F}$. Now, suppose $C=B$. There exists $A_o\in\mathcal G$ such that $A_o\subseteq B$. Suppose $A_o\neq B$. We get $A_o\subsetneq B\subseteq A\cup B$, implying that $B\subseteq_{\mathcal F}A\cup B$, again by Lemma~\ref{sec:3-lem_1}. Therefore, we are left with $A_o=B$. Since $A\in\overline{<\mathcal G>}_{\mathcal F}$ and $A\cup B\neq A$, we get $A\cup B\in<\mathcal G>_{\mathcal F}$ and $B\subsetneq A\cup B$ with $B\in\mathcal G$. By Lemma~\ref{sec:3-lem_1}, $B\subseteq_{\mathcal F}A\cup B$. Hence, $A\cup B\in<\mathcal K>_{\mathcal F}\cup\{A\}$.    
\end{proof}

Equipped with Theorem~\ref{sec:3_theorem_1}, we characterise the $1$-dense families that have closure roots, via Theorem~\ref{sec:3_theorem_2}. 

\begin{theorem}\label{sec:3_theorem_2}
Let $\mathcal F$ be a $1$-dense family over universe $[n]$. Let $\mathcal G$ be the generating set of $\mathcal F$, as defined in Definition~\ref{generating_set_defn}. Then $\mathcal F$ has a closure root if and only if $\overline{<\mathcal G>}_{\mathcal F}=\mathcal F$.  
\end{theorem}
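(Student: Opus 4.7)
The plan is to treat the two directions of the equivalence separately. The backward implication ($\Leftarrow$) is immediate from earlier lemmas: assuming $\overline{<\mathcal G>_{\mathcal F}} = \mathcal F$, Lemma~\ref{sec:3-lem_2} tells us that $<\mathcal G>_{\mathcal F}$ is union-closed, hence by Definition~\ref{closure_root_defn} it is a closure root of $\mathcal F$ and nothing else needs to be checked.

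For the forward implication ($\Rightarrow$), I would suppose $\mathcal F$ admits some closure root $\mathcal H$ and then verify the two inclusions $\overline{<\mathcal G>_{\mathcal F}} \subseteq \mathcal F$ and $\overline{<\mathcal G>_{\mathcal F}} \supseteq \mathcal F$. The first inclusion comes directly from Theorem~\ref{sec:3_theorem_1}: since any family is contained in its own closure, we have $\mathcal H \subseteq \overline{\mathcal H} = \mathcal F$, so the hypotheses of that theorem are satisfied and yield $\mathcal F = \overline{\mathcal H} \supseteq \overline{<\mathcal G>_{\mathcal F}}$. For the reverse inclusion, I would apply Proposition~\ref{sec:3_prop_1} with the union-closed family $<\mathcal G>_{\mathcal F}$ in place of $\mathcal H$ (noting $<\mathcal G>_{\mathcal F} \subseteq \mathcal F$ by Definition~\ref{generating_set_defn_1}): it suffices to check that whenever $A \in <\mathcal G>_{\mathcal F}$ and $B \in \mathcal F$ satisfy $A \subseteq_{\mathcal F} B$, we have $B \in <\mathcal G>_{\mathcal F}$. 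Given such $A$ and $B$, Definition~\ref{generating_set_defn_1} supplies some $C \in \mathcal G$ with $C \subseteq_{\mathcal F} A$, and transitivity of $\subseteq_{\mathcal F}$ on the $1$-dense family $\mathcal F$ (Corollary~\ref{sec:3-cor_1}) then produces $C \subseteq_{\mathcal F} B$, placing $B$ in $<\mathcal G>_{\mathcal F}$.

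All of the substantive work has already been carried out in Theorem~\ref{sec:3_theorem_1}, Lemma~\ref{sec:3-lem_2}, Proposition~\ref{sec:3_prop_1} and Corollary~\ref{sec:3-cor_1}, so no step of this proof should be genuinely difficult. The only conceptual point worth flagging is that the inclusion $\overline{<\mathcal G>_{\mathcal F}} \supseteq \mathcal F$ actually holds for every $1$-dense $\mathcal F$, independently of whether a closure root exists; the existence of a closure root is precisely what is needed to upgrade the one-sided containment of Theorem~\ref{sec:3_theorem_1} into the equality $\overline{<\mathcal G>_{\mathcal F}} = \mathcal F$. This also shows, as a by-product, that when a closure root exists at all, the specific family $<\mathcal G>_{\mathcal F}$ is a witness.
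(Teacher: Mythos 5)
Your proof is correct and follows essentially the same route as the paper: the backward direction is immediate once Lemma~\ref{sec:3-lem_2} guarantees $<\mathcal G>_{\mathcal F}$ is union-closed, and the forward direction combines Theorem~\ref{sec:3_theorem_1} with the containment $\overline{<\mathcal G>}_{\mathcal F}\supseteq\mathcal F$. If anything you are slightly more careful than the paper, which attributes both containments to Theorem~\ref{sec:3_theorem_1} alone, whereas you correctly point out that $\overline{<\mathcal G>}_{\mathcal F}\supseteq\mathcal F$ is a separate fact, valid for every $1$-dense $\mathcal F$, obtained from Proposition~\ref{sec:3_prop_1} together with transitivity (Corollary~\ref{sec:3-cor_1}).
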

\begin{proof}
If $\overline{<\mathcal G>}_{\mathcal F}=\mathcal F$ then the conclusion is immediate. Suppose there exists a closure root $\mathcal H$ of $\mathcal F$. By Theorem~\ref{sec:3_theorem_1}, we have $\overline{\mathcal H}\supseteq\overline{<\mathcal G>}_{\mathcal F}\supseteq\mathcal F$. But $\overline{\mathcal H}=\mathcal F$ by Definition~\ref{closure_root_defn}, hence $\overline{<\mathcal G>}_{\mathcal F}=\mathcal F$.
\end{proof}
We mention here that the proofs of Theorems~\ref{sec:3_theorem_1} and \ref{sec:3_theorem_2} can not be generalised to general families because, as remarked earlier, transitivity of $\subseteq_{\mathcal F}$ does not necessarily hold when $\mathcal{F}$ is a $k$-dense family for some $k > 1$.  

We now provide the example of a $1-$dense family $\F$ with $\mathcal A_{n-1,n}\subseteq\F$ which does \emph{not} have a closure root. 
\begin{cor}\label{sec:3_example_lemma}
Let $k\geq 2$ and $\mathcal F$ be the $1$-dense family ,over universe $[2k+1]$, generated by $\mathcal G$ $= \{\{1, 2\}, \{2, 3\}, \dots, \{2k, 2k+1\}\}$. Then $\mathcal{F}$ does not have a closure root.
\end{cor}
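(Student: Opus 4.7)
The plan is to invoke Theorem~\ref{sec:3_theorem_2}, which reduces the non-existence of a closure root of $\mathcal{F}$ to the inequality $\overline{<\mathcal{G}>_{\mathcal{F}}} \neq \mathcal{F}$. I will establish this inequality by exhibiting a set $A \in \overline{<\mathcal{G}>_{\mathcal{F}}} \setminus \mathcal{F}$. The natural candidate, suggested by the adjacency structure of $\mathcal{G}$, is $A := \{1, 3, 5, \ldots, 2k+1\}$, the set of odd indices in $[2k+1]$. Since $A$ contains no two consecutive integers, $A$ contains no element of $\mathcal{G}$, and hence $A \notin \mathcal{F}$.

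To show $A \in \overline{<\mathcal{G}>_{\mathcal{F}}}$, I will verify that $A \cup D \in <\mathcal{G}>_{\mathcal{F}} \cup \{A\}$ for every $D \in <\mathcal{G}>_{\mathcal{F}}$; this suffices because Lemma~\ref{sec:3-lem_2} already ensures $<\mathcal{G}>_{\mathcal{F}}$ is union-closed. The structural observation driving the argument is that every $D \in <\mathcal{G}>_{\mathcal{F}} \subseteq \mathcal{F}$ contains an adjacent pair $\{j, j+1\}$, one of whose members is even, so $D$ must contain some even integer $e \in \{2, 4, \ldots, 2k\}$. Because $A$ consists entirely of odd integers, $e \notin A$; this forces $A \cup D \supsetneq A$ (ruling out $A \cup D = A$) and leaves me to verify $A \cup D \in <\mathcal{G}>_{\mathcal{F}}$.

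For this, the natural witness is the pair $\{e-1, e\} \in \mathcal{G}$, which is contained in $A \cup D$ since $e - 1$ is odd and hence lies in $A$. Setting $C := (A \cup D) \setminus \{e - 1\}$, I would check the three conditions of Definition~\ref{relative_subset_defn}: $C \cup \{e-1, e\} = A \cup D$; $C \neq A \cup D$ because $e - 1 \in A \cup D$; and $\{e, e+1\} \subseteq C$ (using $e \in D$, the fact that $e+1$ is odd and therefore in $A$, and $e - 1 \neq e + 1$ since $e \geq 2$), which gives $C \in \mathcal{F}$. This yields $\{e-1, e\} \subseteq_{\mathcal{F}} A \cup D$, and hence $A \cup D \in <\mathcal{G}>_{\mathcal{F}}$, completing the verification.

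The main obstacle I expect is conceptual rather than technical: guessing the right witness $A$. The intuition is that choosing $A$ to be precisely the odd indices forces $A \cup D$ to contain a consecutive triple $\{e-1, e, e+1\}$ for every $D \in <\mathcal{G}>_{\mathcal{F}}$, which makes membership in $<\mathcal{G}>_{\mathcal{F}}$ easy to exhibit via Definition~\ref{relative_subset_defn}. Once $A$ is chosen, the remainder is routine parity bookkeeping, and the hypothesis $k \geq 2$ is used only to guarantee that the pool of even indices is large enough for the universal argument to apply uniformly across all $D$.
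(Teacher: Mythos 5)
Your proposal is correct and follows essentially the same route as the paper: both choose the witness $A=\{1,3,\dots,2k+1\}$, reduce via Theorem~\ref{sec:3_theorem_2}, and exploit the fact that any union $A\cup D$ with $D\in{<\mathcal G>}_{\mathcal F}$ contains a consecutive triple $\{e-1,e,e+1\}$ forcing $\{e-1,e\}\subseteq_{\mathcal F}A\cup D$. The only (immaterial) difference is that you verify this relative-subset relation directly from Definition~\ref{relative_subset_defn} with the explicit set $C=(A\cup D)\setminus\{e-1\}$, whereas the paper first notes $\{e-1,e\}\subsetneq_{\mathcal F}\{e-1,e,e+1\}$ and then lifts to $A\cup D$ via Lemma~\ref{sec:3-lem_1}.
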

\begin{proof}
 Consider $A=\{1, 3, 5,\dots, 2k+1\}$. It is immediate that $A \notin \mathcal F$. Let $B\supsetneq A$ and $a\in B \setminus A$. Since $1<a<2k+1$, we have $\{a-1, a, a+1\}\subseteq B$. Since $\{a-1, a\}\subsetneq_{\mathcal F}\{a-1, a, a+1\}\subseteq B$ therefore, by Lemma~\ref{sec:3-lem_1}, we have $\{a-1, a\}\subsetneq_{\mathcal F}B$, and as $\{a-1, a\} \in \mathcal{G}$, hence $B\in<\mathcal G>_{\mathcal{F}}$. Therefore, in particular $\forall C\in<\mathcal G>_{\F}$ with $A\cup C\supsetneq A$, we have $A\cup C\in<\mathcal G>_{\F}$ , which implies that $A\in\overline{<\mathcal G>}_{\mathcal{F}}$. Therefore, by Theorem~\ref{sec:3_theorem_2}, $\mathcal F$ does not have a closure root. 

\end{proof}
\begin{remark}
Let $\mathcal F$ be the $1$-dense family over universe $[2k]$ generated by $\mathcal G=\{\{1, 2\}, \{2, 3\}, \dots, \{2k-1, 2k\}\}$ for $k\geq 3$. Then $\mathcal{F}$ has a closure root.
\end{remark}
To see this, let us first note that whenever $A\in<\mathcal G>_{\F}$ with $|A|\geqslant 3$ then the number of sets $\{i, i+1\}$ such that $\{i, i+1\}\subseteq A$ is at least $2$. If $A=[2k]$ then the result is obvious, otherwise, by the definition of $<\mathcal G>_{\F},$ $A\in<\mathcal G>_{\F}$ means that there is a $\{i, i+1\}\in\mathcal G$ such that $\{i, i+1\}\subsetneq_{\F}A$. This means that there is a $B\in\F$ such that $\{i, i+1\}\cup B=A$ with $B\neq A$. This gives that $\{i, i+1\}\nsubseteq B$ and hence, since $B\in\F$ there is a $j\neq i$ such that $\{j, j+1\}\subseteq B\subseteq A$ hence proving our claim.\\
Now, let us consider $A\in\overline{<\mathcal G>_{\F}}\setminus\F$. Therefore, $\forall i\in A, i+1, i-1\notin A$. If $A=\{i\}$ with $i\geqslant 4$ then $\{1, 2\}\cup A\notin<\mathcal G>_{\F}\cup\{A\}$, otherwise, if $i\leqslant 3$ then $\{5, 6\}\cup A\notin<\mathcal G>_{\F}\cup\{A\}$. So, we can have that $|A|\geqslant 2$. Suppose $1\notin A$ and let $\text{min.} A=t$. Then by the discussion above, $\{t-1, t\}\cup A\notin<\mathcal G>_{\F}\cup\{A\}$. Therefore, $1\in A$. Now, suppose that for some $r\geqslant 2$, $\{1, 3,\dots, 2r-1\}\subseteq A$ and $2r, 2r+1\notin A$. But then, $\{2r-1, 2r\}\cup A\notin<\mathcal G>_{\F}\cup\{A\}$(by the discussion in the previous paragraph). Therefore, by induction, $\{1, 3, \dots, 2k-1\}\subseteq A$. But $\{2k-1, 2k\}\cup A\notin<\mathcal G>_{\F}$ and hence, $A\notin\overline{<\mathcal G>_{\F}}$ which gives us that $\overline{<\mathcal G>_{\F}}=\F$. Thus, completing the argument.\\

We would like to conclude by noting that the family $<\mathcal G>_{\F}$ as described above need not always be $2-$dense. In fact, it can be of arbitrary density. As an example if we consider $\F$ to be the $1-$dense family over universe $[n]$ generated by $\mathcal G=\{[k]\}$ where $n\geqslant k+2$. Then it is easy to see that the family $<\mathcal G>_{\F}=\{[k], [n]\}$ and hence by Corollary~\ref{mcor}, $<\mathcal G>_{\F}$ is $(k+1)-$dense. And finally if we let $\mathcal G=\mathcal A_{n-1, n}$ then $<\mathcal G>_{\F}=\mathcal F$ making $<\mathcal G>_{\F}$ to be $1-$dense.

\section{Further Questions}\label{sec:ques}

Here, we state some further questions, which we believe are worth exploring:
\begin{enumerate}
    \item Do $2$-dense families satisfy Conjecture 1?
    \item What is a characterization of $2$-dense families?
    \item Do $(n-1)$-dense families satisfy Conjecture 1?
    \item What is a characterization of $(n-1)$-dense families?
    \item Since it is known that union closed families $\F$ over universe $[n]$ satisfying $|\F|\geqslant 2^{n-1}$ satisfy Conjecture~\ref{main_conj}(see \cite{h}), it can be an interesting problem to characterise families satisfying $|\overline{\F}|\geqslant 2^{n-1}$.
    \item Suppose $\F$ satisfies Conjecture~\ref{main_conj}. Does $\overline{\F}$ also satisfy Conjecture~\ref{1}? (An affirmative answer to this problem would mean that if $\F$ is an counterexample to Conjecture~\ref{main_conj} then all its closure root are also counterexamples.)
    \item As in \cite{a}, let $d_{\mathcal F}(x)=|\{A\in\mathcal F|x\in A\}$. Let $g(\mathcal F)=\frac{1}{|\mathcal F|}\max\{d_{\mathcal F}(x)|x\in\mathcal F\}$. Let us put $a_{k,n}=\min\{g(\mathcal F)|\mathcal F$ is $k$-dense and over universe $[n]\}$. It can be an interesting problem to study the numbers $a_{k,n}$. If it turns out that $a_{k-1,n}\leqslant a_{k,n}$ for all possible values of $k$ and $n$ then it would prove Conjecture~\ref{main_conj} because $a_{0,n}\geqslant\frac{1}{2}$. We would like to note here that if Conjecture~\ref{main_conj} is true then $a_{n-1,n}\rightarrow\frac{1}{2}$ as $n\rightarrow\infty$. This follows by considering the family $\F=2^{[n-2]}\cup\{[n]\}$ and noting that it is $(n-1)-$dense using Corollary~\ref{mcor}. One natural way of trying to prove $a_{k-1,n}\leqslant a_{k,n}$ is to try and prove that for every family $\F$, $g(\F)\geqslant g(\overline{\F})$. But this is not true in general as is seen by again considering the family $\F=2^{[n-2]}\cup\{[n]\}$ and applying Theorem~\ref{mtheorem_1}.
    
\end{enumerate}
\section{Acknowledgements}
I would like to humbly thank my doctoral advisor Prof. Moumanti Podder for sharing her extremely helpful thoughts and for her help in improving the presentation of this paper.


\end{document}